\newcommand{\erre}{\mathbb{R}}
\newcommand{\enne}{\mathbb{N}}
\newcommand{\diver}{\operatorname{div}}
\newcommand{\ra}{\rightarrow}
\newcommand{\set}[1]{{\left\{#1\right\}}}               
\newcommand{\pa}[1]{{\left(#1\right)}}                  
\newcommand{\sq}[1]{{\left[#1\right]}}                  
\newcommand{\abs}[1]{{\left|#1\right|}}                 
\newcommand{\pair}[1]{\left\langle#1\right\rangle}      
\newcommand{\eps}{\varepsilon}                           
\newcommand{\intst}[1]{\int_0^\infty\int_M #1 \,d\mu dt}
\newtheorem{theorem}{\textbf{Theorem}}[section]
\newtheorem{lemma}[theorem]{\textbf{Lemma}}
\newtheorem{cor}[theorem]{\textbf{Corollary}}
\newtheorem{defi}[theorem]{\textbf{Definition}}
\theoremstyle{remark}
\newtheorem{rem}[theorem]{\textbf{Remark}}
\newtheorem{exe}[theorem]{\textbf{Example}}
\numberwithin{equation}{section}
\title[]
{Nonexistence of solutions to parabolic differential inequalities
with a potential on Riemannian manifolds}
\date{\today} \linespread{1.2}
\keywords{Parabolic inequalities on manifolds; weighted volume
growth; nonexistence of solutions}
\subjclass[2010]{35K59; 35K92; 35R01; 53C20}
\begin{document}

\maketitle

\begin{center}
\textsc{\textmd{P. Mastrolia\footnote{Universit\`{a} degli Studi di
Milano, Italy. Email: paolo.mastrolia@unimi.it.}, D. D.
Monticelli\footnote{Universit\`{a} degli Studi di Milano, Italy.
Email: dario.monticelli@unimi.it.} and F.
Punzo\footnote{Universit\`{a} degli Studi di Milano, Italy. Email:
fabio.punzo@unimi.it. \\ The three authors are supported by GNAMPA
project ``Analisi globale ed operatori degeneri'' and are members of
the Gruppo Nazionale per l'Analisi Matematica, la Probabilit\`{a} e
le loro Applicazioni (GNAMPA) of the Istituto Nazionale di Alta
Matematica (INdAM). }, }}
\end{center}

\begin{abstract}
We are concerned with nonexistence results of nonnegative weak
solutions for a class of quasilinear parabolic problems with a
potential on complete noncompact Riemannian manifolds. In
particular, we highlight the interplay between the geometry of the
underlying manifold, the power nonlinearity and the behavior of the
potential at infinity.
\end{abstract}


\section{Introduction}\label{intro}

In this paper we investigate the nonexistence of nonnegative,
nontrivial weak solutions (in the sense of Definition \ref{def1}
below) to parabolic differential inequalities of the type
\begin{equation}\label{Eq}
\begin{cases}
  \partial_t u -\diver\pa{\abs{\nabla u}^{p-2}\nabla u}\geq V(x, t) u^q &\text{ in }\, M\times (0, \infty) \\ u = u_0 &\text{ in }\, M\times
  \set{0},
  \end{cases}
\end{equation}
where $M$ is a complete, $m$--dimensional, noncompact Riemannian
manifold with metric $g$, $\diver$ and $\nabla$ are respectively the
divergence and the gradient with respect to $g$, $p>1, q>\max\{p-1,
1\}$, the potential satisfies $V=V(x,t)>0$ a.e. in $M\times
(0,\infty)$ and the initial condition $u_0$ is nonnegative.

Local existence, finite time blow-up and global existence of
solutions to parabolic Cauchy problems have attracted much
attention in the literature. In particular, the following
semilinear parabolic Cauchy problem
\begin{equation}\label{ei1a}
\left\{
\begin{array}{ll}
 \,  \partial_t u - \Delta u\, =\,u^{q} \, &\textrm{in}\,\,\mathbb R^m\times
 (0,\infty)
\\&\\
\textrm{ }u \, = u_0& \textrm{in}\,\,  \mathbb R^m\times \{0\} \,,
\end{array}
\right.
\end{equation}
where $q>1, u_0\ge 0, u_0\in L^\infty(\mathbb R^m)$, has been
largely investigated. Indeed (see \cite{Fujita}, \cite{Fuj2} and
\cite{Lev}), problem \eqref{ei1a} does not admit global bounded
solutions for $1<q\le 1+\frac 2 m$. On the contrary, for $q>1+\frac
2 m$ global bounded solutions exist, provided that $u_0$ is
sufficiently small. For initial conditions $u_0\in L^p(\mathbb R^m)$
similar results have been obtained in the framework of mild
solutions in the space $C\big([0,T);L^p(\mathbb R^m)\big)$ in
\cite{Weiss}, \cite{Weiss2}.

Problem \eqref{Eq} with $(M, g)=(\mathbb R^m, g_{\text{flat}})$,
where $g_{\text{flat}}$ is the standard flat metric in the Euclidean
space, together with its generalization to a wider class of
operators of $p-$Laplace type or related to the porous medium
equation, has also been largely studied; without claim of
completeness we refer the reader to \cite{Galak1}, \cite{Galak2},
\cite{GalakLev}, \cite{MitPohozAbsence}, \cite{MitPohozMilan},
\cite{PoTe}, \cite{PuTe}, and references therein. In particular, in
\cite{MitPohozAbsence} it is shown that problem \eqref{Eq} with
$M=\mathbb R^m$ and $V\equiv 1$ does not admit nontrivial
nonnegative weak solutions, provided that
\[ p >\frac{2m}{m+1}\,, \quad q\leq p - 1 + \frac{p}{m}\,.\]

\medskip

Moreover, the blow-up result given in \cite{Fujita} has been
extended to the setting of Riemannian manifolds. To further describe
such results, let us introduce some notation. Let $(M, g)$ be a
complete noncompact Riemannian manifold, endowed with a smooth
Riemannian metric $g$. Fix any point $x_0\in M$, and for any $x\in
M$ denote by $r(x)=\textrm{dist}(x_0, x)$ the Riemannian distance
between $x_0$ and $x$. Moreover, let $B(x_0,r)$ be the geodesics
ball with center $x_0\in M$ and radius $r>0$, and let $\mu$ be the
Riemannian volume on $M$ with volume density $\sqrt g$.

In \cite{Zhang} it is proved that no nonnegative nontrivial weak
solutions to problem \eqref{Eq} with $p=2$ exist, provided there
exist $C>0, \alpha>2, \beta>-2$ such that, for all $r>0$ large
enough:
\begin{itemize}
\item [$(a)$] $\mu(B(x,r))\le C r^{\alpha}$ for all $x\in M$;
\item [$(b)$] $\frac{\partial \log \sqrt g}{\partial r}\le \frac C
r$; \item[$(c)$] $V=V(x)$, $V\in L^\infty_{loc}(M)$ and $C^{-1}
r(x)^{\beta}\leq V(x) \leq  C r(x)^{\beta}$;
\end{itemize}

Observe that if the Ricci curvature of $M$ is nonnnegative, then
$(a)-(b)$ are satisfied, see e.g. \cite{***}. On the other hand (see
Theorem 5.2.10 in \cite{Dav}, or Section 10.1 of \cite{Grig}),
hypotheses $(a)-(b)$ imply that $\lambda_1(M)=0,$ where
$\lambda_1(M)$ is the infimum of the $L^2-$ spectrum of the operator
$-\Delta\,$ on $M$\,.

\smallskip

The semilinear Cauchy problem
\begin{equation}\label{ei4}
\left\{
\begin{array}{ll}
 \,  \partial_t u = \Delta u\, +\, h(t) u^\nu \, &\textrm{in}\,\,\mathbb H^m\times (0,T)
\\&\\
\textrm{ }u \, = u_0& \textrm{in}\,\, \mathbb H^m\times \{0\} \,
\end{array}
\right.
\end{equation}
has been studied in \cite{BPT}, where $\mathbb H^m$ is the
$m-$dimensional hyperbolic space, $u_0$ is nonnegative and bounded
on $M$ and $h$ is a positive continuous function defined in
$[0,\infty)$; note that in this case we have $\lambda_1(\mathbb
H^N)=\frac{(N-1)^2}{4}.$

To be specific, it has been shown that if $h(t)\equiv 1\; (t\ge 0)$,
or if
\begin{equation}\label{e59}
\alpha_1 t^q \le h(t)\le \alpha_2 t^q \quad \textrm{for
any}\;\;t>t_0,
\end{equation}
for some $\alpha_1>0, \alpha_2>0, t_0>0$ and $q>-1$, then there
exist global bounded solutions for sufficiently small initial data
$u_0$. Moreover, when $h(t)=e^{\alpha t}\quad (t\ge 0)$ for some
$\alpha>0$, the authors showed that:
\begin{itemize}
\item[$(i)$] if $1< q < 1+\frac{\alpha}{\lambda_1(\mathbb H^m)},$
then every nontrivial bounded solution of problem \eqref{ei4}
blows up in finite time;

\item[$(ii)$] if $q >  1+\frac{\alpha}{\lambda_1(\mathbb H^m)},$
then problem \eqref{ei4} posses global bounded solutions for small
initial data ;

\item[$(iii)$] if $q = 1+\frac{\alpha}{\lambda_1(\mathbb H^m)}$
and $\alpha> \frac 2 3 \lambda_1(\mathbb H^m),$ then there exist
global bounded solutions of problem \eqref{ei4} for small initial
data.
\end{itemize}


\smallskip

Analogous results to those established in \cite{BPT} have been
obtained in \cite{P1}, for the problem
\begin{equation}\label{e610}
\left\{
\begin{array}{ll}
 \,  \partial_t u = \Delta u\, +\, h(t) u^q \, &\textrm{in}\,\,M\times (0,T)
\\&\\
\textrm{ }u \, = u_0& \textrm{in}\,\,  M\times \{0\} \,,
\end{array}
\right.
\end{equation}
where $M$ is a Cartan-Hadamard Riemannian manifold with sectional
curvature bounded above by a negative constant, and $u_0\in
L^\infty(M)$. Moreover, for initial conditions $u_0\in L^p(M)$
similar results have been established for mild solutions belonging
to $C\big([0,T);L^p(M)\big)$ in \cite{P2}.

\smallskip

Let us mention that nonexistence results of nonnegative nontrivial
solutions have been also much investigated for solutions to elliptic
equations and inequalities both on $\mathbb R^m$ (see, e.g.,
\cite{DaMit}, \cite{MitPohoz227}, \cite{MitPohoz359},
\cite{MitPohozMilan}, \cite{Mont}, \cite{DaLu}) and on Riemannian
manifolds (see \cite{GrigKond}, \cite{GrigS}, \cite{Kurta}
\cite{MMP1}, \cite{MasRigSet}, \cite{Sun1}, \cite{Sun2}). In
particular, the present paper is the natural continuation of
\cite{MMP1}, where some ideas and methods introduced in
\cite{GrigS}, \cite{GrigKond} and \cite{Kurta} have been developed.
Indeed, our results can be regarded as the parabolic counterpart of
those shown in \cite{MMP1}, concerning nonnegative weak solutions to
the inequality
\[ -\diver\pa{\abs{\nabla u}^{p-2}\nabla u}\geq V(x) u^q \quad \text{ in }\,
M\,.
\]
In \cite{MMP1}, as well as in \cite{GrigKond}, \cite{GrigS},
\cite{Sun1} and \cite{Sun2}, the key assumptions are concerned
with the parameters $p, q$ and the behavior of a suitable weighted
volume of geodesic balls, with density a negative power of the
potential $V(x)$.

As for the case of $\mathbb R^m$, also on Riemannian manifolds the
parabolic case presents substantial differences with respect to the
elliptic one. In fact, new test functions have to be used, and
suitable estimates of new integral terms are necessary. On the other
hand, as in the case of elliptic inequalities on Riemannian
manifolds, a simple adaptation of the methods used in $\mathbb R^m$
does not allow to obtain results as accurate as those we prove in
the present work. In the next two subsections we describe our main
results and some of their consequences; furthermore, we compare them
with results in the literature.

\subsection{Main results}
In order to formulate our main results, we shall introduce some
further notation and hypotheses. For each $R>0$, $\theta_1\geq 1$,
$\theta_2\geq 1$ let $S:=M\times[0,\infty)$ and
\[E_R:=\{(x,t)\in S\,:\, r(x)^{\theta_2}+t^{\theta_1}\leq R^{\theta_2} \}\,.\]
Let
\begin{equation*}
\begin{array}{rclrcl}
 \displaystyle\bar s_1\,&:=&\,\displaystyle\frac q{q-1}\theta_2\,,&\bar s_2\,&:=&\,\displaystyle\frac 1{q-1}\,,\\
 \displaystyle\bar s_3\,&:=&\,\displaystyle\frac{pq}{q-p+1}\theta_2\,,&\bar
s_4\,&:=&\,\displaystyle\frac{p-1}{q-p+1}\,.
\end{array}
\end{equation*}

The following conditions, that we call {\bf HP1} and {\bf HP2}, are
the main hypotheses under which we will derive our nonexistence
results for nonnegative nontrivial weak solutions of problem
\eqref{Eq}.

 \textbf{HP1.}\;\,Assume that: $(i)$\; there exist constants
$\theta_1\geq1$, $\theta_2\geq1$, $C_0>0$, $C>0$, $R_0>0$,
$\eps_0>0$ such that
 for every $R>R_0$ and for every $0<\eps<\eps_0$ one has
 \begin{equation}\label{hp1a}\int\int_{E_{2^{1/\theta_2}R}\setminus E_R} t^{(\theta_1-1)\left(\frac
 q{q-1}-\eps\right)}V^{-\frac{1}{q-1}+\eps}d\mu dt\leq C
 R^{\bar s_1+C_0\eps}(\log R)^{s_2}\,,
\end{equation}
for some $0\leq s_2<\bar s_2$\,;

$(ii)$\; for the same constants as above, for every $R>R_0$ and for
every $0<\eps<\eps_0$ one has
\begin{equation}\label{hp1b}\int\int_{E_{2^{1/\theta_2}R}\setminus E_R}
r(x)^{(\theta_2-1)p\left(\frac{q}{q-p+1}-\eps\right)}V^{-\frac{p-1}{q-p+1}+\eps}d\mu
dt\leq C R^{\bar s_3+C_0\eps}(\log R)^{s_4}\,,\end{equation} for
some $0\leq s_4<\bar s_4\,.$

\medskip

\textbf{HP2.}\;\,Assume that: $(i)$\; there exist constants
$\theta_1\geq1$, $\theta_2\geq1$, $C_0>0$, $C>0$, $R_0>0$,
$\eps_0>0$ such that
 for every $R>R_0$ and for every $0<\eps<\eps_0$ one has
\begin{eqnarray}
\label{hp2a}\int\int_{E_{2^{1/\theta_2}R}\setminus E_R}
t^{(\theta_1-1)\left(\frac
 q{q-1}-\eps\right)}V^{-\frac{1}{q-1}+\eps}d\mu dt&\leq& C
 R^{\bar s_1+C_0\eps}(\log R)^{\bar s_2}\,,\\
 \label{hp2aa} \int\int_{E_{2^{1/\theta_2}R}\setminus E_R} t^{(\theta_1-1)\left(\frac
 q{q-1}+\eps\right)}V^{-\frac{1}{q-1}-\eps}d\mu dt&\leq& C
 R^{\bar s_1+C_0\eps}(\log R)^{\bar s_2}\,;
\end{eqnarray}

$(ii)$\; for the same constants as above, for every $R>R_0$ and for
every $0<\eps<\eps_0$ one has
\begin{eqnarray}
\label{hp2b}\int\int_{E_{2^{1/\theta_2}R}\setminus E_R}
r(x)^{(\theta_2-1)p\left(\frac{q}{q-p+1}-\eps\right)}V^{-\frac{p-1}{q-p+1}+\eps}d\mu
dt& \leq &C R^{\bar s_3+C_0\eps}(\log R)^{\bar
s_4}\,,\\
\label{hp2bb}\int\int_{E_{2^{1/\theta_2}R}\setminus E_R}
r(x)^{(\theta_2-1)p\left(\frac{q}{q-p+1}+\eps\right)}V^{-\frac{p-1}{q-p+1}-\eps}d\mu
dt& \leq &C R^{\bar s_3+C_0\eps}(\log R)^{\bar s_4}\,.
\end{eqnarray}

%
%
%

\begin{rem}\label{rem1}
Passing to the limit as $\eps\to 0$ we see that, if {\bf HP1} holds,
then for the same constants as above conditions \eqref{hp1a} and
\eqref{hp1b} hold also for $\eps=0$. Similarly, if {\bf HP2} holds
then \eqref{hp2a} and \eqref{hp2b} (or equivalently \eqref{hp2aa}
and \eqref{hp2bb}) are satisfied also with $\eps=0$.
\end{rem}

\smallskip

We prove the following theorems (for the definition of weak solution
see Definition \ref{def1} below).
\begin{theorem}\label{thm1}
Let $p>1$, $q>\max\{p-1, 1\}$, $V>0$ a.e. in $M\times (0,
\infty)$, $V \in L^1_{loc}\pa{M\times [0, \infty)}$ and $u_0\in
L^1_{loc}(M)$, $u_0\geq 0$ a.e. in $M$. Let $u$ be a nonnegative
weak solution of problem \eqref{Eq}. Assume condition {\bf HP1}.
Then $u=0$ a.e. in $S$\,.
\end{theorem}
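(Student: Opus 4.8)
The plan is to use the classical nonlinear capacity / rescaled test function method (à la Mitidieri–Pohozaev, as adapted in \cite{MMP1}, \cite{GrigS}, \cite{Kurta}), now carried out in the space-time cylinder with the anisotropic exhaustion by the sets $E_R$. First I would argue by contradiction: suppose $u\not\equiv 0$ is a nonnegative weak solution. Plugging into the weak formulation a test function of the form $\varphi=\xi_R^{\lambda}$, where $\xi_R$ is a cutoff that equals $1$ on $E_R$, vanishes outside $E_{2^{1/\theta_2}R}$, and $\lambda>1$ is a large exponent to be chosen, one obtains an inequality of the schematic form
\begin{equation*}
\int\!\!\int_{S} V u^q \xi_R^{\lambda}\,d\mu\,dt \;\le\; \int\!\!\int_{\Sigma_R} u\,\abs{\partial_t \xi_R^{\lambda}}\,d\mu\,dt \;+\; \int\!\!\int_{\Sigma_R} \abs{\nabla u}^{p-1}\,\abs{\nabla \xi_R^{\lambda}}\,d\mu\,dt \;+\; \text{(initial term)},
\end{equation*}
where $\Sigma_R:=E_{2^{1/\theta_2}R}\setminus E_R$ is the annular region where derivatives of the cutoff are supported; the initial term involving $u_0\ge 0$ has the favorable sign and can be discarded (or kept and controlled). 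For the gradient term one first integrates by parts once more to trade $\abs{\nabla u}^{p-1}$ for a term involving $u\,\abs{\nabla u}^{p-2}\nabla u$ acting against second derivatives of $\xi_R$, or alternatively one keeps $\abs{\nabla u}^{p-1}$ and absorbs it directly; either way the point is to produce, after Hölder, a bound only in terms of $\int\!\!\int V u^q \xi_R^{\lambda}$ and geometric integrals of the cutoff.

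The heart of the argument is the application of Young's inequality with exponents adapted to the two terms: on the time-derivative term one uses the pair $(q, q/(q-1))$, writing $u\,\abs{\partial_t\xi_R^\lambda} = \big(V^{1/q}u\,\xi_R^{\lambda/q}\big)\cdot\big(V^{-1/q}\xi_R^{\lambda/q-1}\abs{\partial_t\xi_R^\lambda}\big)$, so that the "good" factor is reabsorbed into the left-hand side and the "bad" factor produces exactly the integrand $t^{(\theta_1-1)\frac{q}{q-1}}V^{-\frac{1}{q-1}}$ on $\Sigma_R$ appearing in \eqref{hp1a} (here $\partial_t\xi_R$ carries the weight $t^{\theta_1-1}/R^{\theta_2}$ coming from $\partial_t(t^{\theta_1}/R^{\theta_2})$). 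On the gradient term one uses the pair $(q/(p-1), q/(q-p+1))$, which after the analogous splitting yields precisely the integrand $r^{(\theta_2-1)p\frac{q}{q-p+1}}V^{-\frac{p-1}{q-p+1}}$ of \eqref{hp1b}, since $\nabla\xi_R$ carries the weight $r^{\theta_2-1}/R^{\theta_2}$. After reabsorption this gives, for every large $R$,
\begin{equation*}
\int\!\!\int_{E_R} V u^q\,d\mu\,dt \;\le\; C\Big( R^{-\bar s_2}\,\Phi_1(R) \;+\; R^{-\bar s_4}\,\Phi_2(R)^{(p-1)/q}\cdots\Big),
\end{equation*}
where $\Phi_1,\Phi_2$ are the left-hand sides of \eqref{hp1a}, \eqref{hp1b}; the precise bookkeeping of the powers of $R$ (the exponents $\bar s_1,\dots,\bar s_4$ are cooked up exactly so that, after dividing by the appropriate power of $R$ coming from the $\xi_R$-weights and from Hölder, the $R$-exponent collapses to a negative power times a logarithmic factor of degree strictly below $\bar s_2$, resp. $\bar s_4$) is the routine but delicate part. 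Crucially one must run this argument with a small $\eps>0$ in the Hölder exponents — replacing $q/(q-1)$ by $q/(q-1)-\eps$ etc. — which is why {\bf HP1} is stated with the $\eps$-family; the loss $R^{C_0\eps}$ is harmless because the gain from the cutoff derivatives beats it for $\eps$ small.

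Once this estimate is in hand, one concludes as follows. The right-hand side is $\le C R^{-\delta}(\log R)^{s}$ with $\delta>0$ (because $s_2<\bar s_2$ and $s_4<\bar s_4$ make the net exponent of $R$ strictly negative), hence letting $R\to\infty$ forces $\int\!\!\int_S V u^q\,d\mu\,dt=0$. Since $V>0$ a.e. in $M\times(0,\infty)$, this gives $u=0$ a.e., the desired contradiction. Two technical points will require care and will be the main obstacles: (a) justifying the double integration by parts for a merely weak solution — this is handled by the usual density/approximation argument, using that $V\in L^1_{loc}$, $u_0\in L^1_{loc}$ and the definition of weak solution to make all the integrals finite a priori, and by first establishing that the borderline integrals $\int\!\!\int_{E_R}Vu^q$ are finite for every $R$; and (b) organizing the Young inequalities so that both "bad" terms land exactly on the two hypotheses {\bf HP1}$(i)$ and {\bf HP1}$(ii)$ with the correct exponents and so that the reabsorption is legitimate (this is where the condition $q>\max\{p-1,1\}$ enters, guaranteeing $q/(q-p+1)$ and $q/(q-1)$ are genuine Hölder exponents $>1$). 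The logarithmic refinement — allowing $(\log R)^{s_2}$ with $s_2<\bar s_2$ rather than a bare power — is obtained by the standard trick of choosing the cutoff exponent $\lambda$ and, if needed, iterating over dyadic annuli or optimizing in an auxiliary parameter; I would defer the precise choice to the computation.
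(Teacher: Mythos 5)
Your plan follows the general nonlinear-capacity / test-function philosophy that the paper uses, but it misses the decisive technical device that makes the argument close, and your final inequality is not what actually comes out of the naive version of the method. Here is the concrete issue.

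If you take $\psi=\xi_R^\lambda$ as the test function and run Young's inequality with the pairs $(q,\,q/(q-1))$ and $(q/(p-1),\,q/(q-p+1))$ as you describe, the ``bad'' terms you get are (schematically)
\[
R^{-\bar s_3}\!\int\!\!\int_{\Sigma_R} r^{(\theta_2-1)\frac{pq}{q-p+1}}V^{-\frac{p-1}{q-p+1}}\,d\mu\,dt
\quad\text{and}\quad
R^{-\bar s_1}\!\int\!\!\int_{\Sigma_R} t^{(\theta_1-1)\frac{q}{q-1}}V^{-\frac{1}{q-1}}\,d\mu\,dt ,
\]
where $\bar s_1=\frac{q}{q-1}\theta_2$, $\bar s_3=\frac{pq}{q-p+1}\theta_2$ are exactly the powers appearing in {\bf HP1}. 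Applying {\bf HP1} (which, by Remark~\ref{rem1}, also holds at $\eps=0$) then yields $\int\!\!\int_{E_R}Vu^q\le C\big[(\log R)^{s_2}+(\log R)^{s_4}\big]$. The power of $R$ cancels \emph{exactly} -- there is no leftover $R^{-\delta}$ with $\delta>0$ -- so this does \emph{not} tend to zero; in fact it diverges if $s_2$ or $s_4$ is positive. The conditions $s_2<\bar s_2$, $s_4<\bar s_4$ in {\bf HP1} are not exponents of $R$ at all (they are exponents of $\log R$), so they cannot produce the negative power of $R$ your sketch relies on. Deferring this to ``bookkeeping'' hides the fact that the bookkeeping does not balance.

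The mechanism the paper actually uses is different in two essential ways. First, the test function is not $\xi_R^\lambda$ but $\psi=u_\eps^\alpha\varphi^s$ with a small \emph{negative} exponent $\alpha$, and the cutoff $\varphi$ is the slowly decaying function $\varphi=\big((r^{\theta_2}+t^{\theta_1})/R^{\theta_2}\big)^{C_1\alpha}$ on $E_R^c$ (globally supported, then truncated by an auxiliary $\eta_n$ that is sent to $1$). The negative power $u_\eps^\alpha$ produces an extra good term $|\alpha|\int|\nabla u|^p u^{\alpha-1}\varphi^s$ on the left, and the slow decay of $\varphi$ turns the right-hand side into integrals over the full exterior region $E_R^c$ that can be converted (via the GrigS-type inequality used at \eqref{eq6}, \eqref{eq8a}) into one-dimensional integrals $\int_{R/2^{1/\theta_2}}^\infty z^{b-1}(\log z)^{s}\,dz$ with $b<0$. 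Second, and crucially, $\alpha$ is taken as $\alpha=-1/\log R$; with this choice $R^{|\alpha|\bar C}=e^{\bar C}$ is bounded, the exponents $b,\beta$ scale like $-|\alpha|$, and the change of variable $y=|b|\log z$ gives a bound $C|\alpha|^{\bar s_4-s_4}+C|\alpha|^{\bar s_2-s_2}$. This is where the strict inequalities $s_2<\bar s_2$, $s_4<\bar s_4$ finally enter: they make these exponents of $|\alpha|$ positive, so the right-hand side tends to $0$ as $R\to\infty$ (hence $\alpha\to 0$). The $\eps$-family in {\bf HP1} is then applied with $\eps$ proportional to $|\alpha|$ (e.g.\ $\eps=|\alpha|/(q-p+1)$), not as an independent small parameter. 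Without the $\alpha$-trick and the coupled choice $\alpha=-1/\log R$, the argument stalls at a bounded (or growing) quantity and no contradiction is obtained.
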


\begin{theorem}\label{thm2}
Let $p>1$, $q>\max\{p-1, 1\}$, $V>0$ a.e. in $M\times (0, \infty)$,
$V \in L^1_{loc}\pa{M\times [0, \infty)}$ and $u_0\in L^1_{loc}(M)$,
$u_0\geq 0$ a.e. in $M$. Let $u$ be a nonnegative weak solution of
problem \eqref{Eq}. Assume condition {\bf HP2}. Then $u=0$ a.e. in
$S$\,.
\end{theorem}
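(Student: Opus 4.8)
The plan is to run the nonlinear capacity (rescaled test function) method of Mitidieri--Pohozaev, in the form developed on Riemannian manifolds in \cite{MMP1}, \cite{GrigS}, \cite{GrigKond}, \cite{Kurta}, arguing along the same lines as in the proof of Theorem \ref{thm1} but now at the borderline exponents of {\bf HP2}. Testing the weak formulation of \eqref{Eq} against an arbitrary nonnegative $\varphi\in C^\infty_c\pa{M\times[0,\infty)}$, integrating by parts in $x$ and $t$, and discarding the term $\int_M u_0\,\varphi(\cdot,0)\,d\mu\geq0$ (using $u_0\geq0$ and $\varphi\geq0$), one obtains the basic inequality
\[
\int\int_S V u^q\,\varphi\,d\mu\,dt\ \leq\ \int\int_S u\,\abs{\partial_t\varphi}\,d\mu\,dt+\int\int_S\abs{\nabla u}^{p-1}\abs{\nabla\varphi}\,d\mu\,dt,
\]
all terms being finite by the local integrability of $u$, $\abs{\nabla u}^{p-1}$ and $Vu^q$ built into Definition \ref{def1}.

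For $R>R_0$ the test functions are taken adapted to the sets $E_R$: set $\varphi_R:=\eta\!\pa{R^{-\theta_2}\pa{r(x)^{\theta_2}+t^{\theta_1}}}^{\lambda}$, where $\eta$ is a fixed smooth cutoff with $0\leq\eta\leq1$, $\eta\equiv1$ on $[0,1]$, $\eta\equiv0$ on $[2,\infty)$, and $\lambda=\lambda(p,q)$ is large. Then $\varphi_R\equiv1$ on $E_R$, $\supp\varphi_R\subset E_{2^{1/\theta_2}R}$, while --- since $\abs{\nabla r}=1$ a.e. --- $\partial_t\varphi_R$ and $\nabla\varphi_R$ are supported in $\mathcal A_R:=E_{2^{1/\theta_2}R}\setminus E_R$ and obey $\abs{\partial_t\varphi_R}\leq CR^{-\theta_2}t^{\theta_1-1}\varphi_R^{1-1/\lambda}$ and $\abs{\nabla\varphi_R}\leq CR^{-\theta_2}r(x)^{\theta_2-1}\varphi_R^{1-1/\lambda}$. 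This is precisely why the weights $t^{\theta_1-1}$, $r^{\theta_2-1}$ and the radii $R$, $2^{1/\theta_2}R$ appear in {\bf HP2}.

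The parabolic term is estimated by Hölder's inequality with exponents $q$ and $\tfrac q{q-1}$, producing the factor $\bigl(\int\int_{\mathcal A_R}Vu^q\bigr)^{1/q}$ and a nonlinear capacity which, after the $R$-power coming from $\abs{\partial_t\varphi_R}^{q/(q-1)}$ is accounted for, is controlled by a power of $\log R$ through \eqref{hp2a}--\eqref{hp2aa}. The gradient term is the delicate one: as no Caccioppoli inequality is available for supersolutions of the parabolic $p$--Laplacian, $\int\int\abs{\nabla u}^p\varphi_R$ cannot be absorbed directly. The standard remedy is to test \eqref{Eq} also against $\pa{u+\delta}^{-\beta}\varphi_R$ for a small $\beta>0$: expanding $\nabla\!\bigl(\pa{u+\delta}^{-\beta}\varphi_R\bigr)$ produces a nonnegative term $\beta\int\int\abs{\nabla u}^p\pa{u+\delta}^{-\beta-1}\varphi_R$ (which is discarded), and a Young inequality on the remaining $\abs{\nabla u}^{p-1}$ term, followed by $\delta\to0$, bounds $\int\int_S Vu^{q-\beta}\varphi_R$ by two integrals of the shape $R^{-\theta_2}\int\int_{\mathcal A_R}u^{1-\beta}t^{\theta_1-1}(\cdots)$ and $R^{-p\theta_2}\int\int_{\mathcal A_R}u^{p-1-\beta}r^{(\theta_2-1)p}(\cdots)$. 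A further Hölder inequality on each (with the pairs $\tfrac{q-\beta}{1-\beta},\tfrac{q-\beta}{q-1}$ and $\tfrac{q-\beta}{p-1-\beta},\tfrac{q-\beta}{q-p+1}$, so that the $V$-powers become $-\tfrac1{q-1}+\eps$ and $-\tfrac{p-1}{q-p+1}+\eps$ with $\eps\sim\beta$), together with \eqref{hp2a}--\eqref{hp2bb}, then yields, writing $\Phi(R):=\int\int_{E_R}Vu^{q-\beta}\,d\mu\,dt$, a recursion of Mitidieri--Pohozaev type
\[
\Phi(R)\ \leq\ C\,\bigl(\Phi(2^{1/\theta_2}R)-\Phi(R)\bigr)^{\gamma}\,R^{C_0\eps}(\log R)^{D},
\]
for a suitable $\gamma=\gamma(p,q,\beta)\in(0,1)$ and $D=D(p,q)$. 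Iterating this inequality --- exploiting the two-sided bounds {\bf HP2} and the freedom to let $\eps=\eps(R)\to0$ as $R\to\infty$ (e.g.\ $\eps(R)\sim1/\log R$, so that $R^{C_0\eps(R)}$ stays bounded) --- one obtains first that $\int\int_S Vu^{q-\beta}\,d\mu\,dt<\infty$, hence $\Phi(2^{1/\theta_2}R)-\Phi(R)\to0$, and then, reinserting this in the recursion, that $\Phi\equiv0$. Therefore $u^{q-\beta}=0$ a.e.\ on $\set{V>0}$, and since $V>0$ a.e.\ we conclude $u=0$ a.e.\ in $S$.

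The step I expect to be the main obstacle is precisely the treatment of the gradient term in this borderline situation: only logarithmic powers of $R$ (together with the slowly growing factor $R^{C_0\eps}$) are at our disposal, so a plain Young inequality would give merely the boundedness of $\int\int_{E_R}Vu^q$; producing the sharp recursion above forces one to combine the auxiliary test function $\pa{u+\delta}^{-\beta}\varphi_R$, the full strength of the two-sided hypotheses \eqref{hp2a}--\eqref{hp2bb}, and a careful simultaneous passage to the limits $\delta\to0$ and $\eps\to0$ (equivalently $\beta\to0$) with all constants uniform --- bookkeeping that is absent in the elliptic setting of \cite{MMP1} and milder under {\bf HP1}.
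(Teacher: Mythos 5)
Your overall framework (nonlinear capacity method, the auxiliary test function $(u+\delta)^{-\beta}\varphi$ to gain control of $\int\int|\nabla u|^p$, H\"older with exponents matched to {\bf HP2}, and the choice $\eps\sim1/\log R$ so that $R^{C_0\eps}$ stays bounded) is directionally close to the paper, but two genuine gaps remain. The first is the test function: $\varphi_R=\eta\pa{R^{-\theta_2}(r^{\theta_2}+t^{\theta_1})}^\lambda$ with a \emph{fixed} exponent $\lambda$ does not reproduce the paper's key device. The paper works with $\varphi_n=\eta_n\varphi$, where $\eta_n$ is a sharp Lipschitz cutoff on the scale $nR$ (sent to infinity first, at fixed $R$) and $\varphi$ is $\bigl(\frac{r^{\theta_2}+t^{\theta_1}}{R^{\theta_2}}\bigr)^{C_1\alpha}$ on $E_R^c$ with $\alpha=-1/\log R$. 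Because $C_1\alpha\to0^-$ this $\varphi$ decays only logarithmically, so the contributions of $\nabla\varphi$ and $\partial_t\varphi$ are integrals over \emph{all} of $E_R^c$ weighted by $z^{b-1}(\log z)^{\bar s_4}$ with $b\sim-c|\alpha|<0$, and the change of variables $y=|b|\log z$ converts the critical log power into a finite factor of order $|\alpha|^{-\bar s_4-1}\Gamma(\bar s_4+1)$, which is exactly what the other $|\alpha|$-powers coming from the Caccioppoli lemma (inequality \eqref{1}) cancel. With a fixed $\lambda$ your capacity integrals live on the single annulus $\mathcal A_R$ and carry a genuine $(\log R)^{\bar s_2}$ or $(\log R)^{\bar s_4}$ factor that nothing compensates at the borderline of {\bf HP2}; this is precisely the obstacle you flag at the end, and it is not surmountable within your framework.

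The second gap is the closure of the argument. The recursion you write, $\Phi(R)\leq C\bigl(\Phi(2^{1/\theta_2}R)-\Phi(R)\bigr)^\gamma R^{C_0\eps}(\log R)^D$ with $\gamma\in(0,1)$ and $D>0$, does not by itself force $\Phi$ to be bounded: for $\Phi(R)\sim(\log R)^c$ the inequality reduces to $c(1-\gamma)\leq D-\gamma+o(1)$, which is consistent with $c>0$ whenever $D>\gamma$, and at the exponents of {\bf HP2} the relevant $D$ involves $\bar s_2=\tfrac1{q-1}$ and $\bar s_4=\tfrac{p-1}{q-p+1}$, which need not be small. The paper proves finiteness of $\int\int_S u^q V$ by a different, self-referencing inequality: using Corollary 2.4 with $\varphi_n$, and after showing the capacity terms $J_1,\dots,J_4$ remain bounded as $n\to\infty$, it obtains $\int\int\varphi_n^s u^q V\leq A\pa{\int\int\varphi_n^s u^q V}^\sigma+B$ with $\sigma\in(0,1)$ and $A,B$ uniform in $n$ and $R$; since $\int\int\varphi_n^s u^q V$ is finite for each $n$, this yields a uniform bound and hence $u^qV\in L^1(S)$. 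Only then is the weaker Lemma \ref{LE_tech3} invoked, where the small powers of $\int\int_{E_R^c}u^q V$ appear, and Fatou's lemma drives $\int\int_S u^q V$ to zero. You would need to replace your difference recursion by a self-referencing bound of this type before attempting to pass to the limit.
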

%

We should note that, to the best of our knowledge, no nonexistence
results for linear or nonlinear parabolic equations on complete,
noncompact Riemannian manifolds have been obtained in the literature
under conditions similar to {\bf HP1} and {\bf HP2}, nor using the
techniques that we exploit to prove Theorems \ref{thm1} and
\ref{thm2}. Even if Theorems \ref{thm1} and \ref{thm2} can be
regarded as the natural parabolic counterparts of the results in
\cite{MMP1} for elliptic equations, their proofs are substantially
different from those in the elliptic case. Moreover, we should also
observe that in \cite{MMP1} a nonexistence result for the stationary
problem was obtained under a different assumption than the
stationary counterparts of the conditions {\bf HP1} and {\bf HP2}
introduced in the present work (see \cite[condition {\bf
HP3}]{MMP1}). An analogous result which could give rise to
nontrivial applications cannot be deduced using our methods for
parabolic equations, and the question whether a hypothesis
corresponding to \cite[condition {\bf HP3}]{MMP1} can be introduced
also in the parabolic setting in order to prove nonexistence results
still remains to be understood.
%
%

\subsection{Applications} This subsection is devoted to the
discussion of some consequences of Theorems \ref{thm1} and
\ref{thm2} and to comparison with existing results in the
literature.

\begin{cor}\label{cor1}
Let $(M, g)=(\mathbb R^m, g_{\text{flat}})$, $V\equiv 1$, $p> 1$.
Suppose that
\begin{equation}\label{eq55}
\max\{1,p-1\}<q \leq \frac p m + p -1\,.
\end{equation}
Let $u$ be a nonnegative weak solution of problem \eqref{Eq}. Then
$u=0$ a.e. in $S$\,.
\end{cor}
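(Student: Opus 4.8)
The plan is to derive this as a direct application of either Theorem \ref{thm1} or Theorem \ref{thm2} by verifying the hypotheses {\bf HP1} or {\bf HP2} in the special case $M=\mathbb R^m$, $g=g_{\text{flat}}$, $V\equiv 1$. Since $V\equiv 1$, the integrands in \eqref{hp1a}--\eqref{hp2bb} involve only powers of $t$ and of $r(x)=|x|$, integrated over the annular region $E_{2^{1/\theta_2}R}\setminus E_R$ where $E_R=\{(x,t):|x|^{\theta_2}+t^{\theta_1}\le R^{\theta_2}\}$. The strategy is to choose the parameters $\theta_1,\theta_2$ appropriately (the natural parabolic scaling suggests taking $\theta_2=p$ and $\theta_1=p/(p-1)$, so that both the time derivative and the $p$-Laplacian scale the same way, but one should check which choice makes the exponents work out), and then to estimate the relevant integrals by elementary means.

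First I would compute, for the region $E_{cR}\setminus E_R$ with $c=2^{1/\theta_2}$, the integral $\int\int t^{a}|x|^{b}\,d\mu\,dt$ for the exponents $a,b$ appearing in the hypotheses. Using the change of variables $x=R^{\theta_2/\theta_2}y$... more precisely, rescaling $|x|=R\,\rho$ and $t=R^{\theta_2/\theta_1}\tau$ (so that $|x|^{\theta_2}+t^{\theta_1}\le R^{\theta_2}$ becomes $\rho^{\theta_2}+\tau^{\theta_1}\le 1$), the integral becomes $R^{(\text{exponent})}$ times a finite constant depending only on $m,a,b,\theta_1,\theta_2$, and the exponent is a linear function of $m$, $a$, $b$. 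The key point is then to verify that with $V\equiv 1$, $s_2=s_4=0$, and the stated range \eqref{eq55} of $q$, the resulting exponent is $\le \bar s_1+C_0\eps$ (resp. $\le \bar s_3+C_0\eps$) for a suitable constant $C_0>0$ and all small $\eps>0$. Since $V\equiv1$, conditions \eqref{hp2aa} and \eqref{hp2bb} (the "$+\eps$" versions) are handled identically to \eqref{hp2a} and \eqref{hp2b}, so verifying {\bf HP2} is essentially no harder than verifying {\bf HP1}; I would check the borderline case $q=\frac pm+p-1$ carefully, since that is where the exponents should match exactly (which is precisely why the critical exponent in \eqref{eq55} takes that value, and why logarithmic corrections with $s_2<\bar s_2$, $s_4<\bar s_4$ are not needed — a pure power bound suffices when the inequality is non-strict in $q$).

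The main obstacle will be the bookkeeping: one must track how the exponent of $R$ produced by the rescaled integral depends on $\theta_1$, $\theta_2$, $p$, $q$, $m$, and confirm that the choice $\theta_2=p$, $\theta_1=p/(p-1)$ (or whatever turns out to be correct) simultaneously satisfies the bound for the time-weighted integral \eqref{hp2a} and the bound for the space-weighted integral \eqref{hp2b}. Concretely, the exponent from \eqref{hp2a} should come out to $m\cdot\frac{\theta_2}{1} + (\theta_1-1)\cdot\frac{q}{q-1}\cdot\frac{\theta_2}{\theta_1} + \frac{\theta_2}{\theta_1}$ (the last term from the $dt$ factor after rescaling), and one needs this $\le \bar s_1=\frac{q}{q-1}\theta_2$ up to an $O(\eps)$ error; similarly the exponent from \eqref{hp2b} must be $\le\bar s_3=\frac{pq}{q-p+1}\theta_2$. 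A short computation should show that both reduce, after substituting $\theta_2=p$, $\theta_1=\frac p{p-1}$, to the single inequality $q\le\frac pm+p-1$, which is exactly \eqref{eq55}. Once this is verified, Theorem \ref{thm2} (or Theorem \ref{thm1}) applies directly and yields $u\equiv 0$ a.e. in $S$.
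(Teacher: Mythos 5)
Your general setup is the right one — estimate the integrals in \textbf{HP1}/\textbf{HP2} by elementary rescaling, reduce them to a bound on the exponent of $R$, and then solve for admissible $\theta_1,\theta_2$ — and this is exactly what the paper does (using \textbf{HP1}, which suffices here since with $V\equiv 1$ one can take $s_2=s_4=0$, strictly below the critical values). However, the specific claim that fixing $\theta_2=p$, $\theta_1=p/(p-1)$ makes both conditions ``reduce to the single inequality $q\le\frac pm+p-1$'' is wrong, and this is where your proposal would break down.

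Carrying out the computation you set up: the exponent of $R$ coming from \eqref{hp1a} (or \eqref{hp2a}) is $m+\frac{\theta_2}{\theta_1}+(\theta_1-1)\frac{q}{q-1}\frac{\theta_2}{\theta_1}$, and requiring this to be at most $\bar s_1=\frac{q}{q-1}\theta_2$ gives the \emph{lower} bound $\frac{\theta_2}{\theta_1}\ge m(q-1)$; the exponent from \eqref{hp1b} is $m+\frac{\theta_2}{\theta_1}+(\theta_2-1)p\frac{q}{q-p+1}$, and requiring this to be at most $\bar s_3=\frac{pq}{q-p+1}\theta_2$ gives the \emph{upper} bound $\frac{\theta_2}{\theta_1}\le\frac{pq}{q-p+1}-m$. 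These are two \emph{opposite}-sided constraints on the ratio $\theta_2/\theta_1$. With your fixed choice $\theta_2/\theta_1=p-1$, the lower-bound condition becomes $q\le 1+\frac{p-1}{m}$, which is strictly smaller than the target $p-1+\frac pm$ whenever $p>2-\frac1m$ (in particular for $p=2$ it gives $1+\frac1m$ instead of the Fujita exponent $1+\frac2m$). The correct move is \emph{not} to fix $\theta_1,\theta_2$ by parabolic scaling, but to treat $\theta_2/\theta_1$ as a free parameter and observe that the interval $\bigl[m(q-1),\,\frac{pq}{q-p+1}-m\bigr]$ is nonempty precisely when $m(q-1)+m\le\frac{pq}{q-p+1}$, which simplifies to \eqref{eq55}; one then picks any ratio in that interval and scales so that both $\theta_1,\theta_2\ge1$. (Also, the coefficient $m\cdot\frac{\theta_2}{1}$ in your exponent formula should just be $m$.)
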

Note that condition \eqref{eq55} in particular requires that
$p>\frac{2m}{m+1}$. Note also that Corollary \ref{cor1} agrees with
results in \cite{MitPohozAbsence}. Furthermore, for $p=2$ we recover
the results on the Laplace operator in \cite{Fujita, Haya}.

\smallskip

\begin{cor}\label{cor2}
Let $M$ be a complete noncompact Riemannian manifold, $p>1$,
$q>\max\{p-1, 1\}$ and $u_0\in L^1_{loc}(M)$, $u_0\geq 0$ a.e. in
$M$. Suppose the potential $V\in L^1_{loc}\pa{M\times [0, \infty)}$
satisfies
\begin{equation}\label{eq70}
V(x,t) \geq f(t) h(x) \quad \textrm{for a.e.}\;\; (x,t)\in S,
\end{equation}
where $f:(0,\infty)\to \mathbb R$, $h: M \to \mathbb R$ are two
functions satisfying
\begin{equation}\label{eq56}
0 < f(t) \leq C (1+t)^\alpha\,\,\,\textrm{for a.e. }t\in
(0,\infty)\qquad\textrm{and}\qquad0< h(x) \leq C (1+
r(x))^{\beta}\,\,\,\textrm{for a.e. }x\in M
\end{equation}
and
\begin{eqnarray}
\label{57}&&\int_0^T f(t)^{-\frac 1{q-1}}\,dt\leq C
T^{\sigma_2}(\log T)^{\delta_2}\,,\qquad \int_0^T
f(t)^{-\frac{p-1}{q-p+1}}\, dt\leq C T^{\sigma_4} (\log
T)^{\delta_4}\,,\\
\label{58}&&\int_{B_R} h(x)^{-\frac 1{q-1}}\,d\mu\leq C
R^{\sigma_1}(\log R)^{\delta_1}\,,\qquad\int_{B_R}
h(x)^{-\frac{p-1}{q-p+1}}\, d\mu\leq C R^{\sigma_3} (\log
R)^{\delta_3}
\end{eqnarray}
for $T,R$ large enough, with
$\alpha,\beta,\sigma_1,\sigma_2,\sigma_3,\sigma_4,\delta_1,\delta_2,\delta_3,\delta_4\geq0$
and $C>0$. Assume that
\begin{itemize}
 \item[i)] $\delta_1+\delta_2<\frac{1}{q-1}\,,\quad\delta_3+\delta_4<\frac{p-1}{q-p+1}\,$;
 \item[ii)]
 $0\leq\sigma_2\leq\frac{q}{q-1}\,,\quad0\leq\sigma_3\leq\frac{pq}{q-p+1}\,$;
 \item[iii)] if $\sigma_2=\frac{q}{q-1}$ then $\sigma_1=0\,$, if
 $\sigma_3=\frac{pq}{p-q+1}$ then $\sigma_4=0\,$;
 \item[iv)]
 $\sigma_1\sigma_4\leq\left(\frac{q}{q-1}-\sigma_2\right)\left(\frac{pq}{q-p+1}-\sigma_3\right)\,$.
\end{itemize}
Then problem \eqref{Eq} does not admit any nontrivial nonnegative
weak solution.
\end{cor}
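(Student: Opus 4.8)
The plan is to derive the corollary from Theorem \ref{thm1}, by verifying that hypotheses \eqref{eq70}, \eqref{eq56}, \eqref{57}, \eqref{58} together with (i)--(iv) imply condition {\bf HP1} for a suitable choice of $\theta_1,\theta_2\geq1$. First, since $V\geq f(t)h(x)$ a.e.\ in $S$ by \eqref{eq70}, and since for $\eps>0$ small the exponents $-\tfrac1{q-1}+\eps$ and $-\tfrac{p-1}{q-p+1}+\eps$ are negative, we have pointwise a.e. $V^{-\frac1{q-1}+\eps}\leq f^{-\frac1{q-1}+\eps}h^{-\frac1{q-1}+\eps}$ and $V^{-\frac{p-1}{q-p+1}+\eps}\leq f^{-\frac{p-1}{q-p+1}+\eps}h^{-\frac{p-1}{q-p+1}+\eps}$. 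Moreover $E_{2^{1/\theta_2}R}\setminus E_R\subseteq E_{2^{1/\theta_2}R}\subseteq B_{2^{1/\theta_2}R}\times[0,T_R]$, where $T_R:=2^{1/\theta_1}R^{\theta_2/\theta_1}$, and on this product set the integrands in \eqref{hp1a} and \eqref{hp1b} factor as a function of $t$ times a function of $x$. Thus, enlarging the domain of integration (all integrands being nonnegative) and using Tonelli's theorem, the left-hand side of \eqref{hp1a} is bounded by
\[ \left(\int_0^{T_R}t^{(\theta_1-1)\left(\frac q{q-1}-\eps\right)}f(t)^{-\frac1{q-1}+\eps}\,dt\right)\left(\int_{B_{2^{1/\theta_2}R}}h^{-\frac1{q-1}+\eps}\,d\mu\right), \]
and that of \eqref{hp1b} by
\[ \left(\int_0^{T_R}f(t)^{-\frac{p-1}{q-p+1}+\eps}\,dt\right)\left(\int_{B_{2^{1/\theta_2}R}}r(x)^{(\theta_2-1)p\left(\frac q{q-p+1}-\eps\right)}h^{-\frac{p-1}{q-p+1}+\eps}\,d\mu\right). \]

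Next I would estimate the four one-variable factors separately. On $B_{2^{1/\theta_2}R}$ one has $r(x)\leq 2^{1/\theta_2}R$, so the polynomial weight $r(x)^{(\theta_2-1)p\left(\frac q{q-p+1}-\eps\right)}$, whose exponent is $\geq0$ because $\theta_2\geq1$, can be pulled out as a power of $R$; writing $h^\eps\leq C^\eps(1+r)^{\beta\eps}$ (from $0<h\leq C(1+r)^\beta$) and applying \eqref{58}, the spatial factors are bounded, for $R$ large (absorbing $2^{1/\theta_2}$ into the constants and into the logarithms), by $CR^{\sigma_1+\beta\eps}(\log R)^{\delta_1}$ and $CR^{(\theta_2-1)p\frac q{q-p+1}+\sigma_3+\beta\eps}(\log R)^{\delta_3}$. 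Symmetrically, on $[0,T_R]$ one pulls $t^{(\theta_1-1)\left(\frac q{q-1}-\eps\right)}$ out as a power of $T_R$ (its exponent being $\geq0$ since $\theta_1\geq1$), writes $f^\eps\leq C^\eps(1+t)^{\alpha\eps}$ using $0<f\leq C(1+t)^\alpha$, and applies \eqref{57} to bound the time factors by $CT_R^{(\theta_1-1)\frac q{q-1}+\sigma_2+\alpha\eps}(\log T_R)^{\delta_2}$ and $CT_R^{\sigma_4+\alpha\eps}(\log T_R)^{\delta_4}$. Substituting $T_R=2^{1/\theta_1}R^{\theta_2/\theta_1}$ and collecting the exponents, one finds that for $R$ large the left-hand sides of \eqref{hp1a} and \eqref{hp1b} are bounded by $CR^{a_1+b\eps}(\log R)^{\delta_1+\delta_2}$ and $CR^{a_2+b\eps}(\log R)^{\delta_3+\delta_4}$ respectively, where $b\leq\tfrac{\theta_2}{\theta_1}\alpha+\beta$, and a direct computation gives $a_1-\bar s_1=\tfrac{\theta_2}{\theta_1}\big(\sigma_2+\tfrac{\theta_1}{\theta_2}\sigma_1-\tfrac q{q-1}\big)$ and $a_2-\bar s_3=\sigma_3+\tfrac{\theta_2}{\theta_1}\sigma_4-\tfrac{pq}{q-p+1}$, with $\bar s_1=\tfrac q{q-1}\theta_2$ and $\bar s_3=\tfrac{pq}{q-p+1}\theta_2$.

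It then remains to choose $\theta_1,\theta_2\geq1$ so that $a_1\leq\bar s_1$ and $a_2\leq\bar s_3$. Writing $\rho:=\theta_1/\theta_2>0$, $A:=\tfrac q{q-1}-\sigma_2$ and $B:=\tfrac{pq}{q-p+1}-\sigma_3$, the two conditions read $\rho\,\sigma_1\leq A$ and $\sigma_4\leq\rho\,B$. By hypothesis (ii) we have $A,B\geq0$; by (iii), $A=0$ forces $\sigma_1=0$ and $B=0$ forces $\sigma_4=0$; and by (iv), $\sigma_1\sigma_4\leq AB$. An elementary case analysis (an empty constraint being read as vacuous) then provides $\rho>0$ meeting both requirements: for instance, when $\sigma_1,\sigma_4>0$ one has $A,B>0$ and any $\rho\in[\sigma_4/B,\,A/\sigma_1]$ works, this interval being nonempty precisely because of (iv); the remaining degenerate cases follow from (iii). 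Such a $\rho$ is realized with $\theta_1,\theta_2\geq1$ by setting $\theta_2=1,\ \theta_1=\rho$ if $\rho\geq1$, and $\theta_1=1,\ \theta_2=1/\rho$ if $0<\rho<1$. With this choice $a_1\leq\bar s_1$ and $a_2\leq\bar s_3$, while $\delta_1+\delta_2<\bar s_2=\tfrac1{q-1}$ and $\delta_3+\delta_4<\bar s_4=\tfrac{p-1}{q-p+1}$ by (i); hence, taking $C_0:=\max\{1,\tfrac{\theta_2}{\theta_1}\alpha+\beta\}$, $\eps_0$ small enough that all exponents used above have the correct sign, and $R_0$ large enough, conditions \eqref{hp1a}--\eqref{hp1b} hold, i.e.\ {\bf HP1} is satisfied; Theorem \ref{thm1} then yields $u=0$ a.e.\ in $S$.

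The only genuinely delicate point is the last step: one must recognize that the coupling inequality (iv) is exactly the feasibility condition for the pair of linear constraints $\rho\,\sigma_1\leq A$, $\sigma_4\leq\rho\,B$ on $\rho=\theta_1/\theta_2$, with (iii) handling the degenerate borderline cases. Everything else is the routine — if somewhat lengthy — bookkeeping of exponents made possible by the product lower bound $V\geq f h$, which decouples the space-time integrals appearing in {\bf HP1}.
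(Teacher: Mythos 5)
Your proposal is correct and follows essentially the same route as the paper: use the product lower bound $V\geq f(t)h(x)$ to factorize the space-time integrals over a slightly enlarged rectangle, bound each one-variable factor by the hypotheses \eqref{eq56}--\eqref{58}, and then observe that conditions \textbf{HP1} reduce to the two constraints $\frac{\theta_2}{\theta_1}(\sigma_2-\frac{q}{q-1})+\sigma_1\leq0$ and $\sigma_3-\frac{pq}{q-p+1}+\frac{\theta_2}{\theta_1}\sigma_4\leq0$ on the ratio $\theta_2/\theta_1$, whose simultaneous feasibility is exactly (ii)--(iv). Your bookkeeping of the exponents and your choice of $C_0$ and $\theta_1,\theta_2$ are consistent with the paper's, so the argument goes through.
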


\begin{cor}\label{cor3}
Let $M$ be a complete noncompact Riemannian manifold, $p>1$,
$q>\max\{p-1, 1\}$ and $u_0\in L^1_{loc}(M)$, $u_0\geq 0$ a.e. in
$M$. Assume that $V\in L^1_{loc}\pa{M\times [0, \infty)}$ satisfies
condition \eqref{eq70} with $f:(0,\infty)\to \mathbb R$, $h: M \to
\mathbb R$ such that
\begin{equation}\label{eq60}
\begin{array}{ll}
\displaystyle C^{-1} (1+t)^{-\alpha} \leq f(t) \leq C
(1+t)^\alpha&\textrm{for a.e. }t\in
(0,\infty)\,\\
\displaystyle C^{-1} (1+r(x))^{-\beta}\leq h(x) \leq C (1+
r(x))^{\beta}&\textrm{for a.e. }x\in M
\end{array}
\end{equation}
and \eqref{57}, \eqref{58} hold for $T,R$ sufficiently large,
$\alpha,\beta,\sigma_1,\sigma_2,\sigma_3,\sigma_4,\delta_1,\delta_2,\delta_3,\delta_4\geq0$
and $C>0$. Suppose that
\begin{itemize}
 \item[i)] $\delta_1+\delta_2\leq\frac{1}{q-1}\,,\quad\delta_3+\delta_4\leq\frac{p-1}{q-p+1}\,$;
 \item[ii)]
 $0\leq\sigma_2\leq\frac{q}{q-1}\,,\quad0\leq\sigma_3\leq\frac{pq}{q-p+1}\,$;
 \item[iii)] if $\sigma_2=\frac{q}{q-1}$ then $\sigma_1=0\,$, if
 $\sigma_3=\frac{pq}{p-q+1}$ then $\sigma_4=0\,$;
 \item[iv)]
 $\sigma_1\sigma_4\leq\left(\frac{q}{q-1}-\sigma_2\right)\left(\frac{pq}{q-p+1}-\sigma_3\right)\,$.
\end{itemize}
Then problem \eqref{Eq} does not admit any nontrivial nonnegative
weak solution.
\end{cor}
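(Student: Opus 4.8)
The plan is to obtain Corollary~\ref{cor3} from Theorem~\ref{thm2}, so the task reduces to verifying that the hypotheses imply condition {\bf HP2} for an appropriate choice of $\theta_1\ge1$, $\theta_2\ge1$ and of the constants $C_0,\eps_0,R_0$. Throughout set $a:=\frac{q}{q-1}$ and $b:=\frac{pq}{q-p+1}$, so that $\bar s_1=a\theta_2$, $\bar s_2=\frac1{q-1}$, $\bar s_3=b\theta_2$, $\bar s_4=\frac{p-1}{q-p+1}$, and note $p\cdot\frac{q}{q-p+1}=b$. I would first fix the exponents. By (ii), $a-\sigma_2\ge0$ and $b-\sigma_3\ge0$; by (iii), $\sigma_1=0$ whenever $a-\sigma_2=0$ and $\sigma_4=0$ whenever $b-\sigma_3=0$; by (iv), $\sigma_1\sigma_4\le(a-\sigma_2)(b-\sigma_3)$. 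An elementary case discussion (according to which of $\sigma_1,\sigma_4,a-\sigma_2,b-\sigma_3$ vanish, the only non-trivial case being all of them positive, where one uses (iv)) then yields a number $\tau>0$ with
\[
\sigma_1\le\tau(a-\sigma_2)\qquad\text{and}\qquad\tau\sigma_4\le b-\sigma_3.
\]
Since any $\tau>0$ equals $\theta_2/\theta_1$ for some $\theta_1\ge1$, $\theta_2\ge1$ (take $\theta_1=1$, $\theta_2=\tau$ if $\tau\ge1$, and $\theta_2=1$, $\theta_1=\tau^{-1}$ if $\tau\le1$), we fix such $\theta_1,\theta_2$.

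Next I would estimate the four double integrals in {\bf HP2} over $A_R:=E_{2^{1/\theta_2}R}\setminus E_R$. On $A_R$ one has $R^{\theta_2}<r(x)^{\theta_2}+t^{\theta_1}\le2R^{\theta_2}$, hence $r(x)\le2^{1/\theta_2}R=:R'$ and $0\le t\le2^{1/\theta_1}R^{\theta_2/\theta_1}=:T$, so $A_R\subseteq B_{R'}\times[0,T]$, with $R',T\to\infty$ as $R\to\infty$. Pick $\eps_0<\min\{\frac1{q-1},\frac{p-1}{q-p+1}\}$; then for $0<\eps<\eps_0$ every exponent of $V$ occurring in \eqref{hp2a}--\eqref{hp2bb} is negative, so from \eqref{eq70} and $f,h>0$ we may bound $V^{-s}\le f(t)^{-s}h(x)^{-s}$, and (by Tonelli, after enlarging the domain to $B_{R'}\times[0,T]$) each integral factors as a time integral over $[0,T]$ times a space integral over $B_{R'}$. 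For \eqref{hp2a}: writing $f^{-\frac1{q-1}+\eps}=f^{-\frac1{q-1}}f^{\eps}$ and $h^{-\frac1{q-1}+\eps}=h^{-\frac1{q-1}}h^{\eps}$, controlling $f^{\eps}\le C^{\eps}(1+T)^{\alpha\eps}$ and $h^{\eps}\le C^{\eps}(1+R')^{\beta\eps}$ by the upper bounds in \eqref{eq60}, and bounding $t^{(\theta_1-1)(a-\eps)}\le T^{(\theta_1-1)a}$ on $[0,T]$ (legitimate since $\theta_1-1\ge0$, $a-\eps>0$, $T\ge1$), the estimates \eqref{57}--\eqref{58} give, for $R$ large,
\[
\int\!\!\int_{A_R} t^{(\theta_1-1)(a-\eps)}V^{-\frac1{q-1}+\eps}\,d\mu\,dt\ \le\ C\,R^{\,a\theta_2-\tau(a-\sigma_2)+\sigma_1+C_1\eps}\,(\log R)^{\delta_1+\delta_2},
\]
where $C_1=C_1(\alpha,\beta,\theta_1,\theta_2)$ and we used $T\asymp R^{\theta_2/\theta_1}$ together with $\tau[(\theta_1-1)a+\sigma_2]+\sigma_1=a\theta_2-\tau(a-\sigma_2)+\sigma_1$. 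Since $\sigma_1\le\tau(a-\sigma_2)$ and $\delta_1+\delta_2\le\frac1{q-1}=\bar s_2$ by (i), this is $\le C R^{\bar s_1+C_0\eps}(\log R)^{\bar s_2}$ once $C_0\ge C_1$ and $R_0$ is large, i.e.\ \eqref{hp2a} holds.

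Inequality \eqref{hp2aa} is proved in the same way, except that now $f^{-\eps}$ and $h^{-\eps}$ are estimated via the \emph{lower} bounds in \eqref{eq60}, namely $f^{-\eps}\le C^{\eps}(1+t)^{\alpha\eps}$ and $h^{-\eps}\le C^{\eps}(1+r(x))^{\beta\eps}$; this is precisely the point at which the two-sided assumption \eqref{eq60} is used, whereas the one-sided \eqref{eq56} was enough in Corollary~\ref{cor2}. Likewise \eqref{hp2b} and \eqref{hp2bb} are handled by writing $r(x)^{(\theta_2-1)p(\frac{q}{q-p+1}\mp\eps)}=r(x)^{(\theta_2-1)b}\,r(x)^{\mp(\theta_2-1)p\eps}\le(R')^{(\theta_2-1)b}(R')^{(\theta_2-1)p\eps}$ on $B_{R'}$, using the second inequalities in \eqref{57} and \eqref{58}, and absorbing the $\pm\eps$ shifts in the exponents of $f$ and $h$ via \eqref{eq60}; this produces a bound $\le C\,R^{\,b\theta_2-(b-\sigma_3-\tau\sigma_4)+C_2\eps}(\log R)^{\delta_3+\delta_4}$, which, since $\tau\sigma_4\le b-\sigma_3$ and $\delta_3+\delta_4\le\frac{p-1}{q-p+1}=\bar s_4$ by (i), is $\le C R^{\bar s_3+C_0\eps}(\log R)^{\bar s_4}$ once $C_0\ge\max\{C_1,C_2\}$. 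Hence {\bf HP2} holds and Theorem~\ref{thm2} forces $u=0$ a.e.\ in $S$. I expect the only genuinely non-routine step to be the choice of $\theta_1,\theta_2$: one has to recognize that (ii)--(iv) are exactly what guarantees an admissible ratio $\tau=\theta_2/\theta_1$ — with (iv) the compatibility between the lower bound on $\tau$ forced by the first pair of estimates and the upper bound forced by the second — while (i) is exactly what keeps the logarithmic factors within the thresholds $\bar s_2,\bar s_4$ permitted by {\bf HP2}. Everything else is routine bookkeeping of powers of $R$, the $R^{O(\eps)}$ corrections from the $\pm\eps$ perturbations being harmlessly absorbed into $C_0\eps$.
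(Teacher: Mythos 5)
Your proposal is correct and takes essentially the same route as the paper: reduce Corollary~\ref{cor3} to Theorem~\ref{thm2} by verifying {\bf HP2} through the factorized estimates coming from \eqref{eq70}, \eqref{eq60}, \eqref{57}, \eqref{58}, with $\theta_2/\theta_1$ chosen precisely so that hypotheses (ii)--(iv) guarantee a compatible ratio and (i) controls the logarithmic exponents. The paper compresses the choice of $\theta_1,\theta_2$ into the phrase ``arguing as in the proof of Corollary~\ref{cor2}'' (i.e.\ reusing conditions \eqref{51}--\eqref{52}), whereas you make the case analysis and the algebraic identity $\tau[(\theta_1-1)a+\sigma_2]+\sigma_1=a\theta_2-\tau(a-\sigma_2)+\sigma_1$ explicit, and you absorb the $(\theta_1-1)\eps$ term a bit more crudely; these are only presentational differences, and the powers of $R$ and $\log R$ you obtain agree with the paper's displayed bounds.
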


\begin{rem}\label{rem2}
\begin{itemize}
\item[i)] We explicitly note that the hypotheses in Corollaries \ref{cor2}
and \ref{cor3} allow for a potential $V$ that can also be
independent of $x\in M$ or of $t\in[0,\infty)$.

\item[ii)] In the particular case of the Laplace--Beltrami operator, i.e.
for $p=2$, from Corollaries \ref{cor2}, \ref{cor3} we have the
following results:

\emph{Let $V$ satisfy condition \eqref{eq70}, with $f:(0,\infty)\to
\mathbb R$, $h M \to \mathbb R$ such that \eqref{eq56} holds and
\begin{eqnarray}
\label{59}&&\int_{B_R}h(x)^{-\frac 1{q-1}}\,d\mu\leq C
R^{\sigma_1}(\log R)^{\delta_1}\,,\qquad \int_0^T
f(t)^{-\frac{1}{q-1}}\, dt\leq C T^{\sigma_2} (\log T)^{\delta_2}\,
\end{eqnarray}
for $T,R$ large enough, with
$\alpha,\beta,\sigma_1,\sigma_2,\delta_1,\delta_2\geq0$, $C>0$ and
\[\delta_1+\delta_2<\frac{1}{q-1}\,,\qquad\sigma_1+2\sigma_2\leq\frac{2q}{q-1}.\]
Then there exists no nonnegative, nontrivial weak solution of
problem \eqref{Eq} with $p=2$.}

\emph{Similarly, if condition \eqref{eq70} on $V$ holds with $f,h$
satisfying \eqref{eq60} and \eqref{59} for $T,R$ sufficiently large,
$\alpha,\beta,\sigma_1,\sigma_2,\delta_1,\delta_2\geq0$, $C>0$ and
if
\[\delta_1+\delta_2\leq\frac{1}{q-1}\,,\qquad\sigma_1+2\sigma_2\leq\frac{2q}{q-1}\,,\]
then there exists no nonnegative, nontrivial weak solution of
problem \eqref{Eq} with $p=2$.}
\end{itemize}
\end{rem}

We should note that, even if in view of Remark \ref{rem2}-i) problem
\eqref{ei4} on the hyperbolic space could in principle be addressed,
we cannot actually obtain nonexistence results for it using our
results. In fact, condition \eqref{58} is not satisfied if
$M=\mathbb H^m$ and $h\equiv 1$, due to the exponential volume
growth of geodesic balls in the hyperbolic space. Therefore, we do
not recover the results given in \cite{BPT} (see also \cite{P1}).
This is essentially due to the fact that in \cite{BPT} spectral
analysis and heat kernel estimates on $\mathbb H^m$ have been used.
Similar methods have also been used on Cartan-Hadamard manifolds in
\cite{P1}. Clearly, such tools are not at disposal on general
Riemannian manifolds, that are the object of our investigation. On
the other hand, our hypotheses {\bf HP1} and {\bf HP2} include a
large class of Riemannian manifolds for which results in \cite{BPT}
or in \cite{P1} cannot be applied. In particular, this includes the
case of Riemannian manifolds that satisfy $(a), (b), (c)$ above,
also treated in \cite{Zhang}.

\smallskip

In \cite{Zhang} quite different methods from ours have been
employed, but also porous medium type nonlinear operators have been
considered. However, we remark that in this work we introduce new
techniques in the setting of parabolic equations on  Riemannian
manifolds. We obtain completely new results in the case of the
p-Laplace operator, which improve on those already present in the
literature even in the particular case of semililinear equations
involving the Laplacian. Indeed, we obtain more general nonexistence
results than those in \cite{Zhang} (see Example \ref{exe1} below).

\medskip

The paper is organized as follows: in Section \ref{sec2} we prove
some preliminary results, that will be used in the proof of the
theorems and corollaries stated in the Introduction; Section
\ref{sec3} contains the proof of Theorems \ref{thm1} and \ref{thm2},
while Section \ref{sec4} is devoted to the proof of the Corollaries.

\section{Auxiliary results}\label{sec2}
We begin with
\begin{defi}\label{def1}
Let $p>1$, $q>\max\{p-1, 1\}$, $V>0$ a.e. in $M\times (0, \infty)$,
$V \in L^1_{loc}\pa{M\times [0, \infty)}$ and $u_0\in L^1_{loc}(M)$,
$u_0\geq 0$ a.e. in $M$. We say that $u \in
W^{1,p}_{loc}(M\times[0,\infty))\cap L^q_{loc}(M\times[0,\infty); V
d\mu dt)$ is a \emph{weak solution} of problem \eqref{Eq} if $u\geq
0$ a.e. in $M\times (0, \infty)$  and for every $\psi \in
W^{1,p}(M\times[0,\infty))$, with $\psi\geq 0$ a.e. in $M\times [0,
\infty)$ and compact support, one has
\begin{equation}\label{Eq_weakSol}
  \intst{\psi u^q V} \leq \intst{\abs{\nabla u}^{p-2}\pair{\nabla u, \nabla \psi}} - \intst{u\,\partial_t\psi}-\int_M u_0 \psi(x, 0)\,d\mu.
\end{equation}
\end{defi}

The next lemmas will be the crucial tools we will use in the proof
of Theorems \ref{thm1} and \ref{thm2}.
\begin{lemma}
  Let $s \geq \max\set{1, \frac{q}{q-1}, \frac{pq}{q-p+1}}$ be fixed. Then there exists a constant $C>0$ such that for every $\alpha \in \frac{1}{2}\pa{-\min\set{1, p-1}, 0}$,
  every nonnegative weak solution $u$ of problem \eqref{Eq} and every $\varphi \in \operatorname{Lip}\pa{M\times [0, \infty)}$ with compact support and $0\leq \varphi \leq 1$ one has
  \begin{align}
  \label{1}&\frac{1}{2}  \intst{V u^{q+\alpha} \varphi^s} +\frac{3}{4} |\alpha|\intst{|\nabla u|^pu^{\alpha-1}\varphi^s}\\
  \nonumber&\hspace{1cm}\leq C\set{\abs{\alpha}^{-\frac{(p-1)q}{q-p+1}} \intst{\abs{\nabla \varphi}^{\frac{p\pa{q+\alpha}}{q-p+1}}V^{-\frac{p+\alpha-1}{q-p+1}}}
   + \intst{\abs{\partial_t\varphi}^{\frac{q+\alpha}{q-1}} V^{-\frac{\alpha+1}{q-1}}}}.
  \end{align}
\end{lemma}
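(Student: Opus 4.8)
The plan is to insert into the weak formulation \eqref{Eq_weakSol} the family of test functions $\psi_\delta:=\pa{u+\delta}^{\alpha}\varphi^{s}$, $\delta\in(0,1]$, to exploit the favourable sign of the terms produced by the time derivative and by the diagonal part of the spatial gradient, and then to absorb the remaining contributions into the left-hand side by two successive applications of Young's inequality; a passage to the limit $\delta\to0^{+}$ finally removes the regularization. First one checks admissibility of $\psi_\delta$: since $\alpha<0$, the map $\tau\mapsto\pa{\tau+\delta}^{\alpha}$ is bounded by $\delta^{\alpha}$ and globally Lipschitz on $[0,\infty)$, so $\pa{u+\delta}^{\alpha}\in W^{1,p}_{loc}\cap L^{\infty}_{loc}$, while $\varphi^{s}$ is Lipschitz with compact support because $s\ge1$; hence $\psi_\delta\in W^{1,p}\pa{M\times[0,\infty)}$, $\psi_\delta\ge0$, with compact support. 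One also records that, under the standing assumptions and for $\alpha\in\pa{-\tfrac12\min\set{1,p-1},0}$, the integrals $L:=\intst{V u^{q+\alpha}\varphi^{s}}$, $P:=\intst{u^{\alpha+p-1}\varphi^{s-p}\abs{\nabla\varphi}^{p}}$ and $Q:=\intst{u^{\alpha+1}\varphi^{s-1}\abs{\partial_t\varphi}}$ are finite: indeed $0<q+\alpha<q$, $0<\alpha+p-1<p$ and $0<\alpha+1<p$, and $u\in L^{q}_{loc}\pa{V\,d\mu dt}\cap L^{p}_{loc}$, $V\in L^{1}_{loc}$ — this is exactly where the smallness of $\abs{\alpha}$ relative to $\min\set{1,p-1}$ enters.

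Inserting $\psi_\delta$ and using $\nabla\psi_\delta=\alpha\pa{u+\delta}^{\alpha-1}\varphi^{s}\nabla u+s\pa{u+\delta}^{\alpha}\varphi^{s-1}\nabla\varphi$, the diagonal gradient term equals $\alpha\intst{\pa{u+\delta}^{\alpha-1}\varphi^{s}\abs{\nabla u}^{p}}=-\abs{\alpha}J_\delta$ with $J_\delta:=\intst{\pa{u+\delta}^{\alpha-1}\varphi^{s}\abs{\nabla u}^{p}}\ge0$, which I move to the left-hand side. For the time term I write $u\pa{u+\delta}^{\alpha-1}\partial_t u=\partial_t G_\delta(u)$ with $G_\delta(\tau):=\int_{0}^{\tau}\sigma\pa{\sigma+\delta}^{\alpha-1}\,d\sigma$, a nonnegative bounded Lipschitz function satisfying $G_\delta(\tau)\le\tfrac{\tau^{\alpha+1}}{\alpha+1}$; integrating by parts in $t$ (legitimate since $G_\delta(u)\varphi^{s}$ is absolutely continuous in $t$ for a.e.\ $x$ and $\varphi$ has compact support), the boundary contribution at $t=0$ is $-\abs{\alpha}\int_{M}G_\delta\pa{u(\cdot,0)}\varphi^{s}(\cdot,0)\,d\mu\le0$, and the boundary term $-\int_{M}u_{0}\pa{u_{0}+\delta}^{\alpha}\varphi^{s}(\cdot,0)\,d\mu$ coming from \eqref{Eq_weakSol} is also $\le0$; both are simply discarded, so nothing about the trace of $u$ at $t=0$ is needed. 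What remains of the time term is bounded, using $G_\delta(u)\le\tfrac{u^{\alpha+1}}{\alpha+1}$, $u\pa{u+\delta}^{\alpha}\le u^{\alpha+1}$ and $\alpha+1>\tfrac12$, by $\tfrac{s}{\alpha+1}Q\le2sQ$. Altogether, for every $\delta\in(0,1]$,
\[
L_\delta+\abs{\alpha}J_\delta\ \le\ s\intst{\pa{u+\delta}^{\alpha}\varphi^{s-1}\abs{\nabla u}^{p-1}\abs{\nabla\varphi}}+2sQ,\qquad L_\delta:=\intst{\pa{u+\delta}^{\alpha}u^{q}\varphi^{s}V}.
\]

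Writing the spatial cross-term as $s\pa{\pa{u+\delta}^{\alpha-1}\varphi^{s}\abs{\nabla u}^{p}}^{\frac{p-1}{p}}\pa{\pa{u+\delta}^{\alpha+p-1}\varphi^{s-p}\abs{\nabla\varphi}^{p}}^{\frac1p}$ and applying Young's inequality with exponents $\tfrac{p}{p-1},p$ and a parameter chosen so that the first summand is $\tfrac{\abs{\alpha}}{4}\pa{u+\delta}^{\alpha-1}\varphi^{s}\abs{\nabla u}^{p}$, one bounds it by $\tfrac{\abs{\alpha}}{4}J_\delta+C\abs{\alpha}^{-(p-1)}\intst{\pa{u+\delta}^{\alpha+p-1}\varphi^{s-p}\abs{\nabla\varphi}^{p}}$ with $C=C(p,s)$. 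Absorbing $\tfrac{\abs{\alpha}}{4}J_\delta$ and letting $\delta\to0^{+}$ — $L_\delta,J_\delta\uparrow L,J$ by monotone convergence, the $\pa{u+\delta}^{\alpha+p-1}$-integral $\downarrow P$, and the resulting $\delta$-uniform bound also yields $J<\infty$ — produces $L+\tfrac34\abs{\alpha}J\le C\abs{\alpha}^{-(p-1)}P+2sQ$. It remains to split $P$ and $Q$ against $L$. For $P$ I use $u^{\alpha+p-1}\varphi^{s-p}\abs{\nabla\varphi}^{p}=\pa{Vu^{q+\alpha}\varphi^{s}}^{\theta}\pa{\varphi^{\,s-\frac{p(q+\alpha)}{q-p+1}}\abs{\nabla\varphi}^{\frac{p(q+\alpha)}{q-p+1}}V^{-\frac{p+\alpha-1}{q-p+1}}}^{1-\theta}$ with $\theta=\tfrac{\alpha+p-1}{q+\alpha}\in(0,1)$; since $s-\tfrac{p(q+\alpha)}{q-p+1}\ge0$ (here $s\ge\tfrac{pq}{q-p+1}$ and $\alpha<0$ are used) the power of $\varphi$ is $\le1$ and is dropped, and Young's inequality with exponents $\tfrac1\theta,\tfrac1{1-\theta}$, tuned so that after multiplication by $C\abs{\alpha}^{-(p-1)}$ the $Vu^{q+\alpha}\varphi^{s}$-term becomes $\tfrac14Vu^{q+\alpha}\varphi^{s}$, leaves a remainder $C\abs{\alpha}^{-\frac{(p-1)(q+\alpha)}{q-p+1}}\abs{\nabla\varphi}^{\frac{p(q+\alpha)}{q-p+1}}V^{-\frac{p+\alpha-1}{q-p+1}}$; finally $0<-\alpha<\tfrac12$ gives $\abs{\alpha}^{-\frac{(p-1)(q+\alpha)}{q-p+1}}\le C\abs{\alpha}^{-\frac{(p-1)q}{q-p+1}}$. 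The analogous splitting of $2sQ$, with $\theta'=\tfrac{\alpha+1}{q+\alpha}\in(0,1)$ and $s-\tfrac{q+\alpha}{q-1}\ge0$ (where $s\ge\tfrac{q}{q-1}$ enters), bounds it by $\tfrac14L+C\intst{\abs{\partial_t\varphi}^{\frac{q+\alpha}{q-1}}V^{-\frac{\alpha+1}{q-1}}}$, the constant being harmless since $2s$ is fixed and $\theta',1-\theta'$ remain in a compact subinterval of $(0,1)$ as $\alpha$ varies in $\pa{-\tfrac12\min\set{1,p-1},0}$. Moving $\tfrac12L$ to the left yields \eqref{1}.

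The delicate point is the rigorous handling of the test function: the admissibility of $\psi_\delta$, the time integration by parts together with the favourable sign of the boundary term at $t=0$, and the passage $\delta\to0^{+}$, which rests on the a priori finiteness of $L,P,Q$ and on extracting $J<\infty$ from a $\delta$-uniform estimate. Everything afterwards — the two Young splittings and the precise powers of $\abs{\alpha},\varphi,\abs{\nabla\varphi},\abs{\partial_t\varphi}$ and $V$ appearing in \eqref{1} — is bookkeeping imposed by scaling.
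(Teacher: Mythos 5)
Your proof is correct and takes essentially the same approach as the paper: test with $(u+\delta)^\alpha\varphi^s$, integrate by parts in time, apply Young's inequality three times (once to absorb the gradient cross-term into $\abs{\alpha}\intst{\abs{\nabla u}^pu^{\alpha-1}\varphi^s}$, twice to split the resulting $\abs{\nabla\varphi}$- and $\abs{\partial_t\varphi}$-integrals against $\intst{Vu^{q+\alpha}\varphi^s}$), and remove the regularization. The only differences are cosmetic and in fact improve the exposition: you package the time integration by parts through the anti-derivative $G_\delta$ rather than splitting $u_\eps u_\eps^{\alpha-1}=u_\eps^\alpha$ and $\eps u_\eps^{\alpha-1}$ as the paper does, you pass $\delta\to0$ before the final two Young splittings rather than at the very end, and you make the admissibility of $\psi_\delta$ and the a priori finiteness of $L,P,Q$ explicit, which is needed to justify the absorptions and the limit passage.
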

\begin{proof}
  For any $\eps>0$ let $u_\eps := u+\eps$. Define $\psi = u_\eps^\alpha \varphi^s$; then $\psi$ is an admissible test function for problem \eqref{Eq}, with
  \begin{equation*}
    \nabla\psi = \alpha u_\eps^{\alpha-1} \varphi^s \nabla u + s\varphi^{s-1}u_\eps^\alpha\nabla\varphi, \quad \partial_t\psi=\alpha u_\eps^{\alpha-1}\varphi^s\partial_t u + s\varphi^{s-1}u_\eps^\alpha\partial_t\varphi.
  \end{equation*}
Inequality \eqref{Eq_weakSol} gives
\begin{equation}\label{EQ2}
  \intst{u^q u_\eps^\alpha \varphi^s V} \leq \alpha\intst{\abs{\nabla u}^pu_\eps^{\alpha-1}\varphi^s} +s\intst{\abs{\nabla u}^{p-2}\pair{\nabla u, \nabla\varphi}u_\eps^\alpha \varphi^{s-1}} + I,
\end{equation}
where
\begin{equation}
  I = -\alpha \intst{u_\eps^{\alpha-1}\varphi^s u \partial_t u} - s\intst{u u_\eps^{\alpha}  \varphi^{s-1} \partial_t\varphi } - \int_M{u_0 \pa{u_0+\eps}^\alpha \varphi^s\pa{x, 0}}\,d\mu.
\end{equation}
Now we have
\begin{align*}
-\alpha \intst{u_\eps^{\alpha-1}\varphi^s u \partial_t u} &= -\alpha\intst{u_\eps^{\alpha}\varphi^s  \partial_t u}-\alpha\eps\intst{u_\eps^{\alpha-1}\varphi^s \partial_t u}\\\\
&=-\frac{\alpha}{\alpha+1}\intst{\partial_t\big(u_\eps^{\alpha+1}\big)\varphi^s}+\eps\intst{\partial_t\big(u_\eps^{\alpha}\big)\varphi^s}\,.
\end{align*}
Since $u_\eps^\alpha,u_\eps^{\alpha+1}\in
W^{1,p}_{loc}(M\times[0,\infty))$ with $p>1$ and since
$\varphi^s\in W^{1,p'}(M\times[0,\infty))$ and has compact
support, integrating by parts we obtain
\begin{align*}
-\alpha \intst{u_\eps^{\alpha-1}\varphi^s u \partial_t u} =&
\frac{\alpha s}{\alpha+1}\intst{\varphi^{s-1} u_\eps^{\alpha+1}
\partial_t \varphi} -\eps s\intst{u_\eps^\alpha
\varphi^{s-1}\partial_t\varphi}\\
&+\frac{\alpha}{\alpha+1}\int_M{\varphi^s\pa{x,
0}\pa{u_0+\eps}^{\alpha+1}}\,d\mu-\eps\int_M{\varphi^s\pa{x,
0}\pa{u_0+\eps}^{\alpha}}\,d\mu,
\end{align*}
thus, recalling that $u_\eps=u+\eps$, we have
\begin{equation}
  I = -\frac{s}{\alpha+1} \intst{u_\eps^{\alpha+1}\varphi^{s-1}\partial_t\varphi}-\frac{1}{\alpha+1}\int_M{\pa{u_0+\eps}^{\alpha+1}\varphi^s\pa{x, 0}}\,d\mu.
\end{equation}
This, combined with \eqref{EQ2}, yields
\begin{align}
  \intst{u^qu_\eps^\alpha \varphi^s V} &\leq \alpha\intst{\abs{\nabla u}^pu_\eps^{\alpha-1}\varphi^s} +s\intst{\abs{\nabla u}^{p-2}\pair{\nabla u, \nabla\varphi}u_\eps^\alpha \varphi^{s-1}} \\ \nonumber &-\frac{s}{\alpha+1} \intst{u_\eps^{\alpha+1}\varphi^{s-1}\partial_t\varphi}-\frac{1}{\alpha+1}\int_M{\pa{u_0+\eps}^{\alpha+1}\varphi^s\pa{x, 0}}\,d\mu
\end{align}
and then
\begin{align}\label{Eq_3}
&\abs{\alpha}\intst{\abs{\nabla
u}^pu_\eps^{\alpha-1}\varphi^s}+\intst{u^qu_\eps^\alpha \varphi^s V}
+\frac{1}{\alpha+1}\int_M{\pa{u_0+\eps}^{\alpha+1}\varphi^s\pa{x,
0}}\,d\mu\\\nonumber&\leq  s\intst{\abs{\nabla u}^{p-2}\pair{\nabla
u, \nabla\varphi}u_\eps^\alpha \varphi^{s-1}} -\frac{s}{\alpha+1}
\intst{u_\eps^{\alpha+1}\varphi^{s-1}\partial_t\varphi}.
\end{align}
Now we estimate the first integral in the right-hand side of \eqref{Eq_3} using Young's inequality, obtaining
\begin{align*}
   &s\intst {\varphi^{s-1}u^{\alpha}_{\eps}\abs{\nabla u}^{p-2}\pair{\nabla u, \nabla\varphi}}\\
   &\qquad\leq s\intst {\varphi^{s-1}u^{\alpha}_{\eps}\abs{\nabla u}^{p-1} \abs{\nabla\varphi}} \\
   &\qquad= \intst{\pa{\abs{\alpha}^{\frac{p-1}{p}}\varphi^{s\frac{p-1}{p}}u_\eps^{-\pa{\abs{\alpha}+1}\frac{p-1}{p}}\abs{\nabla
u}^{p-1} }\pa{s
\abs{\alpha}^{-\frac{p-1}{p}}\varphi^{\frac{s}{p}-1}u_\eps^{1-\frac{\abs{\alpha}+1}{p}}\abs{\nabla
\varphi}}} \\
&\qquad\leq\frac{\abs{\alpha}}{4}\intst{\varphi^su^{\alpha-1}_{\eps}\abs{\nabla
u}^p}+\frac{s}{p}\sq{\frac{4s(p-1)}{\abs{\alpha}p}}^{p-1}\intst{\varphi^{s-p}u^{p-(\abs{\alpha}+1)}_\eps\abs{\nabla\varphi}^p}.
\end{align*}
From \eqref{Eq_3} we deduce
\begin{align}\label{Eq_4}
&\frac34\abs{\alpha}\intst{\abs{\nabla
u}^pu_\eps^{\alpha-1}\varphi^s}+\intst{u^qu_\eps^\alpha \varphi^s V}
+\frac{1}{\alpha+1}\int_M{\pa{u_0+\eps}^{\alpha+1}\varphi^s\pa{x,
0}}\,d\mu\\
\nonumber&\qquad\leq
\frac{s}{p}\sq{\frac{4s(p-1)}{\abs{\alpha}p}}^{p-1}\intst{\varphi^{s-p}u^{p-(\abs{\alpha}+1)}_\eps\abs{\nabla\varphi}^p}
+\frac{s}{\alpha+1}
\intst{u_\eps^{\alpha+1}\varphi^{s-1}\abs{\partial_t\varphi}}.
\end{align}
Note that, by Young's inequality,
\begin{align*}
&\frac{s}{p}\sq{\frac{4s(p-1)}{\abs{\alpha}p}}^{p-1}\intst{\varphi^{s-p}u^{p-(\abs{\alpha}+1)}_\eps\abs{\nabla\varphi}^p}
 \\ &\qquad=\frac{s}{p}\sq{\frac{4s(p-1)}{\abs{\alpha}p}}^{p-1}
\intst{\pa{u_\eps^{p+\alpha-1}\varphi^{s\pa{\frac{p+\alpha-1}{q+\alpha}}}V^{\frac{p+\alpha-1}{q+\alpha}}}\pa{\abs{\nabla\varphi}^p\varphi^{s-p-s\pa{\frac{p+\alpha-1}{q+\alpha}}}V^{-\frac{p+\alpha-1}{q+\alpha}}}}
\\ &\qquad\leq \frac14 \intst{u_\eps^{q+\alpha}\varphi^s V} + C\pa{\alpha,
s}
\intst{\abs{\nabla\varphi}^{\frac{p\pa{q+\alpha}}{q-p+1}}\varphi^{s-p\pa{\frac{q+\alpha}{q-p+1}}}V^{-\frac{p+\alpha-1}{q+p-1}}}
\end{align*}
and
\begin{align*}
&\frac{s}{\alpha+1}\intst{u_\eps^{\alpha+1}\varphi^{s-1}\abs{\partial_t\varphi}}
\\
&\qquad=\frac{s}{\alpha+1}\intst{\pa{u_\eps^{\alpha+1}\varphi^{s\pa{\frac{\alpha+1}{q+\alpha}}}V^{\frac{\alpha+1}{q+\alpha}}}\pa{\varphi^{-s\pa{\frac{\alpha+1}{q+\alpha}}+s-1}\abs{\partial_t\varphi}V^{-\frac{\alpha+1}{q+\alpha}}}}
  \\&\qquad\leq \frac14\intst{u_\eps^{q+\alpha}\varphi^s V} + D\pa{\alpha, s}\intst{\abs{\partial_t\varphi}^{\frac{q+\alpha}{q-1}}\varphi^{s-\frac{q+\alpha}{q-1}} V^{-\frac{\alpha+1}{q-1}} }
\end{align*}
where
\[
C\pa{\alpha, s}= \frac{s}{p}\sq{\frac{4s(p-1)}{\abs{\alpha}p}}^{p-1}
\frac{q-p+1}{q+\alpha}\sq{\frac{\pa{q+\alpha}p}{4s\pa{p+\alpha-1}}\pa{\frac{4s\pa{p-1}}{p\abs{\alpha}}}^{1-p}}^{-\frac{p+\alpha-1}{q-p+1}}
\]
and
\[
D\pa{\alpha, s}=\frac{s}{\alpha+1}\frac{q-1}{q+\alpha}\pa{\frac{4s}{q+\alpha}}^{\frac{\alpha+1}{q-1}}.
\]
Substituting in \eqref{Eq_4} we have
\begin{align*}
  &\frac34\abs{\alpha}\intst{\abs{\nabla u}^pu_\eps^{\alpha-1}\varphi^s}+\intst{u^qu_\eps^\alpha \varphi^s V}\\&\quad-\frac12\intst{u_\eps^{q+\alpha}\varphi^s
  V}+\frac{1}{\alpha+1}\int_M{\pa{u_0+\eps}^{\alpha+1}\varphi^s\pa{x,
0}}\,d\mu\\&\qquad\leq  C\pa{\alpha,
s}\intst{\abs{\nabla\varphi}^{\frac{p\pa{q+\alpha}}{q-p+1}}
  \varphi^{s-p\pa{\frac{q+\alpha}{q-p+1}}}V^{-\frac{p+\alpha-1}{q-p+1}}}+D\pa{\alpha, s}\intst{\abs{\partial_t\varphi}^{\frac{q+\alpha}{q-1}}\varphi^{s-\frac{q+\alpha}{q-1}}V^{-\frac{\alpha+1}{q-1}}}.
\end{align*}
Now letting $\eps\ra0$ and applying Fatou's lemma, we get
\begin{align}\label{Eq_5}
 \frac{3}{4} |\alpha|\intst{|\nabla u|^pu^{\alpha-1}\varphi^s}&+\frac{1}{2}\intst{V u^{q+\alpha}\varphi^s}\\
 &\nonumber\quad\leq C\pa{\alpha, s}\intst{\abs{\nabla\varphi}^{\frac{p\pa{q+\alpha}}{q-p+1}}\varphi^{s-p\pa{\frac{q+\alpha}{q-p+1}}}V^{-\frac{p+\alpha-1}{q-p+1}}}\\
 &\nonumber\qquad+D\pa{\alpha,
 s}\intst{\abs{\partial_t\varphi}^{\frac{q+\alpha}{q-1}}\varphi^{s-\frac{q+\alpha}{q-1}}V^{-\frac{\alpha+1}{q-1}}}\,,
\end{align}
where we use the convention $|\nabla u|^pu^{\alpha-1}\equiv0$ on the
set where $u=0$, since $\nabla u=0$ a.e. on level sets of $u$. Now
since there exists a positive constant $C$, depending on $s,p,q$,
such that
\[
C\pa{\alpha, s} \leq C \abs{\alpha}^{-\frac{(p-1)q}{q-p+1}}, \quad
D\pa{\alpha, s} \leq C,
\]
and since $0\leq\varphi\leq1$ on $M\times[0,\infty)$, by our
assumptions on $s$ the conclusion follows from \eqref{Eq_5}.

\end{proof}

\begin{lemma}\label{LE_tech2}
Let $s\geq \max\set{1, \frac{q+1}{q-1}, \frac{2 pq}{q-p+1}}$ be
fixed. Then there exists a constant $C>0$ such that for every
nonnegative weak solution $u$ of equation \eqref{Eq}, every function
$\varphi \in \operatorname{Lip}(S)$ with compact support and $0 \leq
\varphi \leq 1$ and every $\alpha\in
\pa{-\frac{1}{2}\min\set{1,p-1,q-1,\frac{q-p+1}{p-1}}, 0}$ one has
\begin{align}
  \label{2.10}&\intst{\varphi^su^q V}\\
  \nonumber&\quad\leq C
     \pa{\abs{\alpha}^{-1-\frac{(p-1)q}{(q-p+1)}}\intst{{V^{-\frac{p+\alpha-1}{q-p+1}}\abs{\nabla\varphi}^{\frac{p(q+\alpha)}{q-p+1}}}}+|\alpha|^{-1} \intst{\abs{\partial_t\varphi}^{\frac{q+\alpha}{q-1}} V^{-\frac{\alpha+1}{q-1}}} }^\frac{p-1}{p}\\
  \nonumber&\qquad\times\pa{\int\int_{S\setminus K} V^{-\frac{(1-\alpha)(p-1)}{q-(1-\alpha)(p-1)}}
     \abs{\nabla\varphi}^\frac{pq}{q-(1-\alpha)(p-1)}\,d\mu dt}^\frac{q-(1-\alpha)(p-1)}{pq}\pa{\int\int_{S\setminus K}\varphi^su^q V\,d\mu dt}^\frac{(1-\alpha)(p-1)}{pq} \\
  \nonumber&\qquad+C\left(\int\int_{S\setminus K}\varphi^s u^{q+\alpha}V\,d\mu dt\right)^\frac{1}{q+\alpha}\pa{\intst{V^{-\frac{1}{q+\alpha-1}}
     \abs{\partial_t\varphi}^{\frac{q+\alpha}{q+\alpha-1}}}}^{\frac{q+\alpha-1}{q+\alpha}}.
\end{align}
with $K=\{\pa{x, t}\in S : \varphi(x, t)=1\}$.
\end{lemma}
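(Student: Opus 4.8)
The plan is to use $\psi=\varphi^s$ as a test function in the weak formulation \eqref{Eq_weakSol} and then to bound the two resulting ``boundary-type'' integrals by the quantities controlled in the previous lemma, via a chain of Hölder inequalities. Since $s\geq1$ and $\varphi\in\operatorname{Lip}(S)$ is compactly supported with $0\leq\varphi\leq1$, the function $\psi=\varphi^s$ is an admissible nonnegative compactly supported test function. Using $\nabla\psi=s\varphi^{s-1}\nabla\varphi$, $\partial_t\psi=s\varphi^{s-1}\partial_t\varphi$, the Cauchy--Schwarz inequality $\abs{\nabla u}^{p-2}\pair{\nabla u,\nabla\varphi}\leq\abs{\nabla u}^{p-1}\abs{\nabla\varphi}$ and $u_0\geq0$, I would first obtain
\[
\intst{\varphi^su^qV}\leq s\intst{\abs{\nabla u}^{p-1}\varphi^{s-1}\abs{\nabla\varphi}}+s\intst{u\,\varphi^{s-1}\abs{\partial_t\varphi}}.
\]

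For the first integral on the right I would apply Hölder's inequality with exponents $\tfrac{p}{p-1}$ and $p$, splitting the weight as $\varphi^{s-1}=\varphi^{s\frac{p-1}{p}}\varphi^{\frac sp-1}$ (legitimate since $s\geq\tfrac{2pq}{q-p+1}\geq p$), to get
\[
\intst{\abs{\nabla u}^{p-1}\varphi^{s-1}\abs{\nabla\varphi}}\leq\pa{\intst{\abs{\nabla u}^pu^{\alpha-1}\varphi^s}}^{\frac{p-1}{p}}\pa{\intst{u^{(1-\alpha)(p-1)}\varphi^{s-p}\abs{\nabla\varphi}^p}}^{\frac1p},
\]
with the usual convention $\abs{\nabla u}^pu^{\alpha-1}\equiv0$ on $\{u=0\}$ (where $\nabla u=0$ a.e.). Since $\alpha\in\frac12\pa{-\min\set{1,p-1},0}$ and $s\geq\max\set{1,\tfrac q{q-1},\tfrac{pq}{q-p+1}}$, dropping the nonnegative term $\tfrac12\intst{Vu^{q+\alpha}\varphi^s}$ in \eqref{1} and dividing by $\tfrac34\abs\alpha$ bounds the first factor by
\[
\intst{\abs{\nabla u}^pu^{\alpha-1}\varphi^s}\leq C\pa{\abs{\alpha}^{-1-\frac{(p-1)q}{q-p+1}}\intst{V^{-\frac{p+\alpha-1}{q-p+1}}\abs{\nabla\varphi}^{\frac{p(q+\alpha)}{q-p+1}}}+\abs{\alpha}^{-1}\intst{\abs{\partial_t\varphi}^{\frac{q+\alpha}{q-1}}V^{-\frac{\alpha+1}{q-1}}}}.
\]
For the second factor, the hypotheses on $\alpha$ force $0<(1-\alpha)(p-1)<q$, hence $q-(1-\alpha)(p-1)\geq\tfrac{q-p+1}{2}$ and so $s\geq\tfrac{2pq}{q-p+1}\geq\tfrac{pq}{q-(1-\alpha)(p-1)}$; this allows a further Hölder inequality with conjugate exponents $\tfrac{q}{(1-\alpha)(p-1)}$ and $\tfrac{q}{q-(1-\alpha)(p-1)}$ after splitting off a suitable power of $\varphi$, and — using $\abs{\nabla\varphi}\equiv0$ a.e.\ on $K$ to restrict the domain to $S\setminus K$ and $0\leq\varphi\leq1$ to discard the residual power of $\varphi$ in the second factor —
\[
\intst{u^{(1-\alpha)(p-1)}\varphi^{s-p}\abs{\nabla\varphi}^p}\leq\pa{\int\int_{S\setminus K}\varphi^su^qV\,d\mu dt}^{\frac{(1-\alpha)(p-1)}{q}}\pa{\int\int_{S\setminus K}V^{-\frac{(1-\alpha)(p-1)}{q-(1-\alpha)(p-1)}}\abs{\nabla\varphi}^{\frac{pq}{q-(1-\alpha)(p-1)}}\,d\mu dt}^{\frac{q-(1-\alpha)(p-1)}{q}}.
\]

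For the second integral $\intst{u\,\varphi^{s-1}\abs{\partial_t\varphi}}$ I would use that $\alpha>-\tfrac12(q-1)$ forces $q+\alpha-1>\tfrac{q-1}{2}>0$ and $\tfrac{q+\alpha}{q+\alpha-1}\leq\tfrac{q+1}{q-1}\leq s$, so Hölder's inequality with conjugate exponents $q+\alpha$ and $\tfrac{q+\alpha}{q+\alpha-1}$, together with $\abs{\partial_t\varphi}\equiv0$ a.e.\ on $K$ and $0\leq\varphi\leq1$, yields
\[
\intst{u\,\varphi^{s-1}\abs{\partial_t\varphi}}\leq\pa{\int\int_{S\setminus K}\varphi^su^{q+\alpha}V\,d\mu dt}^{\frac1{q+\alpha}}\pa{\intst{V^{-\frac1{q+\alpha-1}}\abs{\partial_t\varphi}^{\frac{q+\alpha}{q+\alpha-1}}}}^{\frac{q+\alpha-1}{q+\alpha}}.
\]
Substituting the estimates of the gradient term and of the time term into the first display, raising the bound for $\intst{\abs{\nabla u}^pu^{\alpha-1}\varphi^s}$ to the power $\tfrac{p-1}{p}$, and noting that the factor $\abs\alpha^{-(p-1)/p}$ can be pulled inside the bracket so that the powers of $\abs\alpha$ recombine exactly into $\abs\alpha^{-1-\frac{(p-1)q}{q-p+1}}$ and $\abs\alpha^{-1}$, one arrives at \eqref{2.10} after relabeling the constants (all depending only on $p,q,s$).

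The bulk of the work — and the only real subtlety — is the exponent bookkeeping. One must check that the stated hypotheses $s\geq\max\set{1,\tfrac{q+1}{q-1},\tfrac{2pq}{q-p+1}}$ and $\alpha\in\pa{-\tfrac12\min\set{1,p-1,q-1,\tfrac{q-p+1}{p-1}},0}$ are precisely what guarantees that: (i) $\alpha$ lies in the admissibility range of \eqref{1}, i.e.\ $\abs\alpha<\tfrac12\min\set{1,p-1}$; (ii) each of the three Hölder splittings leaves a nonnegative residual power of $\varphi$, so that $0\leq\varphi\leq1$ can be used to discard it (this is where $s\geq p$, $s\geq\tfrac{pq}{q-(1-\alpha)(p-1)}$ and $s\geq\tfrac{q+\alpha}{q+\alpha-1}$ are needed); and (iii) the iterated Hölder exponents are genuinely conjugate and positive, which is exactly the role of the constraint $(1-\alpha)(p-1)<q$ coming from $-\alpha<\tfrac12\tfrac{q-p+1}{p-1}$. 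No analytic ingredient beyond the previous lemma and Hölder's inequality is required.
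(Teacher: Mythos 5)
Your proposal is correct and follows the same route as the paper's proof: test with $\psi=\varphi^s$, split the spatial term by Hölder with exponents $\tfrac{p}{p-1}$ and $p$, bound $\intst{\varphi^s\abs{\nabla u}^pu^{\alpha-1}}$ by dropping the nonnegative $\tfrac12\intst{Vu^{q+\alpha}\varphi^s}$ term in \eqref{1} and dividing by $\tfrac34\abs{\alpha}$, apply a second Hölder with exponents $\tfrac{q}{(1-\alpha)(p-1)}$ and its conjugate to $\intst{\varphi^{s-p}u^{(1-\alpha)(p-1)}\abs{\nabla\varphi}^p}$ over $S\setminus K$, and handle the time term by Hölder with exponents $q+\alpha$ and $\tfrac{q+\alpha}{q+\alpha-1}$. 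Your exponent bookkeeping (in particular the checks $s\geq\tfrac{pq}{q-(1-\alpha)(p-1)}$, $s\geq\tfrac{q+\alpha}{q+\alpha-1}$, and $(1-\alpha)(p-1)<q$) is exactly the role of the stated hypotheses on $s$ and $\alpha$, matching the paper.
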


\begin{proof}
Under our assumptions $\psi=\varphi^s$ is a feasible test function
in equation \eqref{Eq_weakSol}. Thus we obtain
\begin{equation}\label{6}
 \intst{ \varphi^s u^q V}\leq s \intst{\varphi^{s-1}\abs{\nabla u}^{p-2}\pair{\nabla
 u,\nabla\varphi}}-s\intst{u\varphi^{s-1}\partial_t \varphi}
 -\int_M u_0(x)\varphi^s(x,0)\,d\mu\,.
\end{equation}
Through an application of H\"{o}lder's inequality we obtain
\begin{align}
  \label{7}\intst{u\varphi^{s-1}|\partial_t \varphi|} \leq
  \left(\int\int_{S\setminus K} u^{q+\alpha}V \varphi^s\, d\mu dt
  \right)^{\frac1{q+\alpha}}\left(\intst{V^{-\frac 1{q+\alpha-1}}\varphi^{\frac{(s-1)(q+\alpha)-s}{q+\alpha-1}}|\partial_t
  \varphi|^{\frac{q+\alpha}{q+\alpha-1}}}\right)^{\frac{q+\alpha-1}{q+\alpha}}\,.
\end{align}
On the other hand, using again H\"{o}lder's inequality we obtain
\begin{align}
  \label{7a}&\intst{ s\varphi^{s-1}\abs{\nabla u}^{p-1}\abs{\nabla\varphi}}\\
  \nonumber&\qquad=s\intst{\pa{\varphi^{\frac{p-1}{p}s} \abs{\nabla u}^{p-1}u^{-\frac{p-1}{p}(1-\alpha)}}
            \pa{\varphi^{\frac{s}{p}-1}u^{\frac{p-1}{p}(1-\alpha)}\abs{\nabla\varphi}}}\\
  \nonumber&\qquad\leq s\pa{\intst{\varphi^s \abs{\nabla u}^pu^{\alpha-1}}}^\frac{p-1}{p}
            \pa{\intst{\varphi^{s-p}u^{(p-1)(1-\alpha)}\abs{\nabla\varphi}^p}}^\frac{1}{p}.
\end{align}
Moreover from equation \eqref{1} we deduce
\begin{align}\label{2.6}
 \intst{{\varphi^s\abs{\nabla u}^pu^{\alpha-1}}}& \leq C
  |\alpha|^{-1-\frac{(p-1)q}{q-p+1}}\intst{{V^{-\frac{p+\alpha-1}{q-p+1}}\abs{\nabla\varphi}^{\frac{p(q+\alpha)}{q-p+1}}}}\\
  \nonumber&+
  C|\alpha|^{-1}\intst{\abs{\partial_t\varphi}^{\frac{q+\alpha}{q-1}} V^{-\frac{\alpha+1}{q-1}}},
\end{align}
with $C>0$ depending on $s$. Thus from \eqref{6}, \eqref{7},
\eqref{7a} and \eqref{2.6} we obtain
\begin{align}
\label{2.8}&  \intst{\varphi^su^q V} \\
\nonumber& \leq C\left\{
  |\alpha|^{-1-\frac{(p-1)q}{q-p+1}}\intst{{V^{-\frac{p+\alpha-1}{q-p+1}}\abs{\nabla\varphi}^{\frac{p(q+\alpha)}{q-p+1}}}}
  +|\alpha|^{-1} \intst{\abs{\partial_t\varphi}^{\frac{q+\alpha}{q-1}} V^{-\frac{\alpha+1}{q-1}}} \right\}^\frac{p-1}{p}\\
\nonumber &\times\pa{\intst{ \varphi^{s-p}
  u^{(p-1)(1-\alpha)}\abs{\nabla\varphi}^p}}^\frac{1}{p}\\
\nonumber & +C\left(\int\int_{S\setminus K} u^{q+\alpha}V
\varphi^s\, d\mu dt
  \right)^{\frac1{q+\alpha}}\left(\intst{V^{-\frac 1{q+\alpha-1}}\varphi^{\frac{(s-1)(q+\alpha)-s}{q+\alpha-1}}|\partial_t
  \varphi|^{\frac{q+\alpha}{q+\alpha-1}}}\right)^{\frac{q+\alpha-1}{q+\alpha}}\,.
\end{align}
We use again H\"{o}lder's inequality with exponents
\[
a=\frac{q}{(1-\alpha)(p-1)}, \qquad
b=\frac{a}{a-1}=\frac{q}{q-(1-\alpha)(p-1)}
\]
to obtain
\begin{align*}
 &\intst{\varphi^{s-p}u^{(p-1)(1-\alpha)}\abs{\nabla\varphi}^p}\\
 &\qquad\leq \pa{\int\int_{S\setminus K}\varphi^su^q
     V\,d\mu dt}^\frac{(1-\alpha)(p-1)}{q}\pa{\int\int_{S\setminus
     K}\varphi^{s-\frac{pq}{q-(1-\alpha)(p-1)}} V^{-\frac{(1-\alpha)(p-1)}{q-(1-\alpha)(p-1)}}
     \abs{\nabla\varphi}^\frac{pq}{q-(1-\alpha)(p-1)}\,d\mu dt}^\frac{q-(1-\alpha)(p-1)}{q}.
\end{align*}
Substituting into \eqref{2.8} we have
\begin{align}
\label{2.8a}&  \intst{\varphi^su^q V} \\
\nonumber& \leq C\left\{
  |\alpha|^{-1-\frac{(p-1)q}{q-p+1}}\intst{{V^{-\frac{p+\alpha-1}{q-p+1}}\abs{\nabla\varphi}^{\frac{p(q+\alpha)}{q-p+1}}}}
  +|\alpha|^{-1} \intst{\abs{\partial_t\varphi}^{\frac{q+\alpha}{q-1}} V^{-\frac{\alpha+1}{q-1}}} \right\}^\frac{p-1}{p}\\
\nonumber &\times\pa{\int\int_{S\setminus K}\varphi^su^q
     V\,d\mu dt}^\frac{(1-\alpha)(p-1)}{qp}\pa{\int\int_{S\setminus
     K}\varphi^{s-\frac{pq}{q-(1-\alpha)(p-1)}} V^{-\frac{(1-\alpha)(p-1)}{q-(1-\alpha)(p-1)}}
     \abs{\nabla\varphi}^\frac{pq}{q-(1-\alpha)(p-1)}\,d\mu dt}^\frac{q-(1-\alpha)(p-1)}{qp}\\
\nonumber & +C\left(\int\int_{S\setminus K} u^{q+\alpha}V
\varphi^s\, d\mu dt
  \right)^{\frac1{q+\alpha}}\left(\intst{V^{-\frac 1{q+\alpha-1}}\varphi^{\frac{(s-1)(q+\alpha)-s}{q+\alpha-1}}|\partial_t
  \varphi|^{\frac{q+\alpha}{q+\alpha-1}}}\right)^{\frac{q+\alpha-1}{q+\alpha}}\,.
\end{align}
Now inequality \eqref{2.10} immediately follows from the previous
relation, by our assumptions on $s,\alpha$ and since
$0\leq\varphi\leq1$.

\end{proof}

\begin{cor}
Under the hypotheses of Lemma \ref{LE_tech2} one has
\begin{align}
\label{2.11}&  \intst{\varphi^su^q V} \\
\nonumber& \leq C\left\{
  |\alpha|^{-1-\frac{(p-1)q}{q-p+1}}\intst{{V^{-\frac{p+\alpha-1}{q-p+1}}\abs{\nabla\varphi}^{\frac{p(q+\alpha)}{q-p+1}}}}
  +|\alpha|^{-1} \intst{\abs{\partial_t\varphi}^{\frac{q+\alpha}{q-1}} V^{-\frac{\alpha+1}{q-1}}} \right\}^\frac{p-1}{p}\\
\nonumber &\times\pa{\int\int_{S\setminus K}\varphi^su^q
     V\,d\mu dt}^\frac{(1-\alpha)(p-1)}{qp}\pa{\int\int_{S\setminus
     K}V^{-\frac{(1-\alpha)(p-1)}{q-(1-\alpha)(p-1)}}
     \abs{\nabla\varphi}^\frac{pq}{q-(1-\alpha)(p-1)}\,d\mu dt}^\frac{q-(1-\alpha)(p-1)}{qp}\\
\nonumber & +C\pa{\abs{\alpha}^{-\frac{(p-1)q}{q-p+1}}
\intst{\abs{\nabla\varphi}^{\frac{p\pa{q+\alpha}}{q-p+1}}
     V^{-\frac{p+\alpha-1}{q-p+1}}} + \intst{\abs{\partial_t\varphi}^{\frac{q+\alpha}{q-1}}
     V^{-\frac{\alpha+1}{q-1}}}}^{\frac{1}{q+\alpha}}\\
\nonumber &\times\left(\intst{V^{-\frac 1{q+\alpha-1}}|\partial_t
  \varphi|^{\frac{q+\alpha}{q+\alpha-1}}}\right)^{\frac{q+\alpha-1}{q+\alpha}}\,.
\end{align}
\end{cor}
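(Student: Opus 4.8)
The plan is to observe that inequality \eqref{2.11} coincides with \eqref{2.10} except in its last summand, where the factor $\pa{\int\int_{S\setminus K}\varphi^su^{q+\alpha} V\,d\mu dt}^{1/(q+\alpha)}$ — which still involves the unknown $u$ — has been replaced by an explicit quantity depending only on $\varphi$ and $V$. Hence it suffices to bound $\int\int_{S\setminus K}\varphi^su^{q+\alpha} V\,d\mu dt$ from above using the already established estimate \eqref{1}, and then to substitute the resulting bound into \eqref{2.10}.

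First I would check that the hypotheses of Lemma \ref{LE_tech2} are at least as strong as those needed to invoke \eqref{1}: indeed $s\geq\max\set{1,\tfrac{q+1}{q-1},\tfrac{2pq}{q-p+1}}\geq\max\set{1,\tfrac{q}{q-1},\tfrac{pq}{q-p+1}}$, and $\alpha\in\pa{-\tfrac12\min\set{1,p-1,q-1,\tfrac{q-p+1}{p-1}},0}\subseteq\tfrac12\pa{-\min\set{1,p-1},0}$, so \eqref{1} applies to the same $u$, $\varphi$, $\alpha$. Discarding the nonnegative term $\tfrac34|\alpha|\intst{|\nabla u|^pu^{\alpha-1}\varphi^s}$ on the left of \eqref{1} and multiplying by $2$ gives
\[
\intst{V u^{q+\alpha}\varphi^s}\leq C\set{\abs{\alpha}^{-\frac{(p-1)q}{q-p+1}} \intst{\abs{\nabla \varphi}^{\frac{p(q+\alpha)}{q-p+1}}V^{-\frac{p+\alpha-1}{q-p+1}}}+ \intst{\abs{\partial_t\varphi}^{\frac{q+\alpha}{q-1}} V^{-\frac{\alpha+1}{q-1}}}},
\]
with $C>0$ depending only on $s,p,q$. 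Since the integrand $V u^{q+\alpha}\varphi^s$ is nonnegative and $S\setminus K\subseteq S$, this in particular bounds $\int\int_{S\setminus K}\varphi^su^{q+\alpha} V\,d\mu dt$. Raising to the power $\tfrac1{q+\alpha}$ and plugging the outcome into the last term of \eqref{2.10} produces exactly \eqref{2.11}.

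The only point requiring a little care — and essentially the sole reason the proof is not completely immediate — is the uniformity of the constant when passing to the power $\tfrac1{q+\alpha}$: one must know that $C^{1/(q+\alpha)}$ is controlled by a constant depending only on $s,p,q$. This follows from the range of $\alpha$ allowed in Lemma \ref{LE_tech2}, which forces $q+\alpha>\tfrac{q+1}{2}>1$, so that $q+\alpha$ is bounded away from $0$ and, being larger than $1$, gives $C^{1/(q+\alpha)}\leq\max\set{C,1}$. No new test function, integration by parts, or additional estimate beyond \eqref{1} and Lemma \ref{LE_tech2} is needed.
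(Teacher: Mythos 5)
Your proof is correct and coincides with the paper's argument, which simply states that the conclusion follows by combining \eqref{2.10} and \eqref{1}: you discard the nonnegative gradient term in \eqref{1}, use it to bound $\int\int_{S\setminus K}\varphi^s u^{q+\alpha}V\,d\mu\,dt$ from above, raise to the power $1/(q+\alpha)$ and substitute into the last line of \eqref{2.10}. Your verification of the compatibility of parameter ranges ($s$ and $\alpha$) and the harmlessness of the exponent $1/(q+\alpha)$ on the constant are exactly the details the paper elides.
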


\begin{proof}
The conclusion immediately follows combining \eqref{2.10} and
\eqref{1}.
\end{proof}

\begin{lemma}\label{LE_tech3}
Let $s\geq \max\set{1, \frac{q+1}{q-1}, \frac{2 pq}{q-p+1}}$ be
fixed. Then there exists a constant $C>0$ such that for every
nonnegative weak solution $u$ of equation \eqref{Eq}, every function
$\varphi \in \operatorname{Lip}(S)$ with compact support and $0 \leq
\varphi \leq 1$ and every $\alpha\in
\pa{-\frac{1}{2}\min\set{1,p-1,q-1,\frac{q-p+1}{p-1}}, 0}$ one has
\begin{align}
  \label{eq47}&\intst{\varphi^su^q V}\\
  \nonumber&\quad\leq C
     \pa{\abs{\alpha}^{-1-\frac{(p-1)q}{(q-p+1)}}\intst{{V^{-\frac{p+\alpha-1}{q-p+1}}\abs{\nabla\varphi}^{\frac{p(q+\alpha)}{q-p+1}}}}+
     |\alpha|^{-1} \intst{\abs{\partial_t\varphi}^{\frac{q+\alpha}{q-1}} V^{-\frac{\alpha+1}{q-1}}} }^\frac{p-1}{p}\\
  \nonumber&\qquad\times\pa{\int\int_{S\setminus K} V^{-\frac{(1-\alpha)(p-1)}{q-(1-\alpha)(p-1)}}
     \abs{\nabla\varphi}^\frac{pq}{q-(1-\alpha)(p-1)}\,d\mu dt}^\frac{q-(1-\alpha)(p-1)}{pq}\pa{\int\int_{S\setminus K}\varphi^su^q V\,d\mu dt}^\frac{(1-\alpha)(p-1)}{pq} \\
  \nonumber&\qquad+C\left(\int\int_{S\setminus K} \varphi^su^{q}V\,d\mu dt\right)^\frac{1}{q}\pa{\intst{V^{-\frac{1}{q-1}}
     \abs{\partial_t\varphi}^{\frac{q}{q-1}}}}^{\frac{q-1}{q}}.
\end{align}
with $S=M\times [0, \infty)$ and $K=\{\pa{x, t}\in S : \varphi(x,
t)=1\}$.
\end{lemma}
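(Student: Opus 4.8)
The plan is to run the proof of Lemma \ref{LE_tech2} essentially verbatim, changing only the choice of H\"older exponents in the estimate of the parabolic term. Starting from \eqref{6}, that is from testing \eqref{Eq_weakSol} with $\psi=\varphi^s$, I would discard the nonnegative term $\int_M u_0(x)\varphi^s(x,0)\,d\mu$ and use $\abs{\nabla u}^{p-2}\pair{\nabla u,\nabla\varphi}\leq\abs{\nabla u}^{p-1}\abs{\nabla\varphi}$ together with $-\partial_t\varphi\leq\abs{\partial_t\varphi}$ to obtain
\begin{equation*}
\intst{\varphi^s u^q V}\leq s\intst{\varphi^{s-1}\abs{\nabla u}^{p-1}\abs{\nabla\varphi}}+s\intst{u\varphi^{s-1}\abs{\partial_t\varphi}}\,.
\end{equation*}
The first term on the right is treated exactly as in Lemma \ref{LE_tech2}: I apply \eqref{7a}, bound $\intst{\varphi^s\abs{\nabla u}^pu^{\alpha-1}}$ by means of \eqref{2.6} (that is, \eqref{1}), and then apply H\"older's inequality with exponents $\frac{q}{(1-\alpha)(p-1)}$ and $\frac{q}{q-(1-\alpha)(p-1)}$ to $\intst{\varphi^{s-p}u^{(p-1)(1-\alpha)}\abs{\nabla\varphi}^p}$; since $\alpha$ lies in the prescribed interval one checks $\frac{pq}{q-(1-\alpha)(p-1)}\leq\frac{2pq}{q-p+1}\leq s$, so the leftover power of $\varphi$ is $\leq 1$. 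This reproduces verbatim the first two lines of the right-hand side of \eqref{eq47}.

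The only genuinely new point is the estimate of $s\intst{u\varphi^{s-1}\abs{\partial_t\varphi}}$. Since $\varphi$ is Lipschitz, $\partial_t\varphi=0$ a.e.\ on the level set $K=\{\varphi=1\}$, so this integral is in fact over $S\setminus K$; there I apply H\"older's inequality with exponents $q$ and $\frac{q}{q-1}$, writing the integrand as $\pa{u\,V^{1/q}\varphi^{s/q}}\pa{V^{-1/q}\varphi^{s-1-s/q}\abs{\partial_t\varphi}}$, to get
\begin{equation*}
\intst{u\varphi^{s-1}\abs{\partial_t\varphi}}\leq\pa{\int\int_{S\setminus K}\varphi^su^qV\,d\mu dt}^{\frac{1}{q}}\pa{\int\int_{S\setminus K}V^{-\frac{1}{q-1}}\varphi^{s-\frac{q}{q-1}}\abs{\partial_t\varphi}^{\frac{q}{q-1}}\,d\mu dt}^{\frac{q-1}{q}}\,.
\end{equation*}
The residual exponent of $\varphi$ is $s-\frac{q}{q-1}$, which is $\geq0$ precisely because $s\geq\frac{q+1}{q-1}$; hence $\varphi^{s-q/(q-1)}\leq1$ and the last factor is $\leq\pa{\intst{V^{-1/(q-1)}\abs{\partial_t\varphi}^{q/(q-1)}}}^{(q-1)/q}$, which is exactly the last term of \eqref{eq47}.

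Combining the two estimates and absorbing the constant $s$ (together with its dependence on $p,q$) into a single $C>0$ yields \eqref{eq47}. I do not anticipate any real obstacle here: the argument is a routine variant of the proof of Lemma \ref{LE_tech2}, the substitution of the H\"older pair $\pa{q+\alpha,\frac{q+\alpha}{q+\alpha-1}}$ by $\pa{q,\frac{q}{q-1}}$ in the parabolic term being the only change; the sole thing to verify is that the leftover power $s-\frac{q}{q-1}$ of $\varphi$ stays nonnegative, which is guaranteed by the standing hypothesis $s\geq\frac{q+1}{q-1}$.
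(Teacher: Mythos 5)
Your proposal is correct and takes essentially the same approach as the paper: the paper's (terse) proof simply says to repeat the argument of Lemma \ref{LE_tech2} but apply \eqref{7} with $\alpha=0$, and your explicit H\"older estimate with exponents $(q,\,q/(q-1))$ is exactly \eqref{7} at $\alpha=0$, with the check $s\geq\frac{q+1}{q-1}$ ensuring the leftover power $s-\frac{q}{q-1}$ of $\varphi$ is nonnegative.
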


\begin{proof}
Inequality \eqref{eq47} can be proved in the same way as
\eqref{2.10}, where the only difference with respect to the above
argument is that in this case one has to use inequality \eqref{7}
with $\alpha=0$.
\end{proof}

\section{Proof of Theorems \ref{thm1} and \ref{thm2}}\label{sec3}
\begin{proof}[\it Proof of Theorem \ref{thm1}]
For any fixed $R>0$ sufficiently large, let $\alpha:=-\frac 1{\log
R}$. Fix any $C_1 > \frac{C_0 + \theta_2+1}{\theta_2}$ with $C_0$
and $\theta_2$ as in {\bf HP1}. Define for all $(x,t)\in S$
\begin{equation}\label{cutoff}
\varphi(x,t):=\left\{
\begin{array}{ll}
\,   1 &\textrm{if}\,\,(x,t)\in E_R\,,
\\& \\
\left(\frac{r(x)^{\theta_2}+t^{\theta_1}}{R^{\theta_2}}\right)^{C_1\alpha}&
\textrm{if\ \ } (x,t)\in E_R^c\,,
\end{array}
\right.
\end{equation}
and for all $n\in \mathbb N$
\begin{equation}\label{cutoff2}
\eta_n(x,t):=\left\{
\begin{array}{ll}
\,   1 &\textrm{if\ \ }  (x,t)\in E_{nR}\,,
\\& \\
2- \frac{r(x)^{\theta_2}+t^{\theta_1}}{(nR)^{\theta_2}}    & \textrm{if\ \ } (x,t)\in E_{2^{1/\theta_2}nR}\setminus E_{nR}\,,\\& \\
0 & \textrm{if\ \ } (x,t)\in E_{2^{1/\theta_2}nR}^c\,.
\end{array}
\right.
\end{equation}
Let
\begin{equation}\label{eq13}
\varphi_n(x,t):=\eta_n(x,t)\varphi(x,t)\quad \textrm{for all}\;\, (x,t)\in S\,.
\end{equation}
We have $\varphi_n\in \operatorname{Lip}(S)$ with $0\leq
\varphi_n\leq 1$; furthermore,
\begin{align*}
\partial_t \varphi_n= \eta_n\partial_t\varphi + \varphi\partial_t
\eta_n,\qquad\qquad\nabla \varphi_n= \eta_n\nabla\varphi +
\varphi\nabla \eta_n
\end{align*}
a.e. in $S$, and for every $a\geq 1$
\begin{align*}
|\partial_t \varphi_n|^a &\leq 2^{a-1}(|\partial_t  \varphi|^a +
\varphi^a|\partial_t  \eta_n|^a),&|\nabla\varphi_n|^a &\leq
2^{a-1}(|\nabla \varphi|^a + \varphi^a|\nabla \eta_n|^a)
\end{align*}
a.e. in $S$. Now we use $\varphi_n$ in formula \eqref{1}, with any
fixed $s \geq \max\set{1, \frac{q}{q-1}, \frac{pq}{q-p+1}}$, and we
see that for some positive constant $C$ and for every $n\in \mathbb
N$ and every small enough $|\alpha|>0$, we have
\begin{align}\label{eq12}
  & \intst{V u^{q+\alpha} \varphi_n^s} \\
  \nonumber&\quad\leq C\set{\abs{\alpha}^{-\frac{(p-1)q}{q-p+1}} \intst{\abs{\nabla
  \varphi_n}^{\frac{p\pa{q+\alpha}}{q-p+1}}V^{-\frac{p+\alpha-1}{q-p+1}}}
   + \intst{\abs{\partial_t\varphi_n}^{\frac{q+\alpha}{q-1}}
   V^{-\frac{\alpha+1}{q-1}}}}\\
   \nonumber&\quad\leq C\Big\{\abs{\alpha}^{-\frac{(p-1)q}{q-p+1}} \intst{\abs{\nabla
   \varphi}^{\frac{p\pa{q+\alpha}}{q-p+1}}V^{-\frac{p+\alpha-1}{q-p+1}}}\\
  \nonumber&\qquad +
\abs{\alpha}^{-\frac{(p-1)q}{q-p+1}}
\int\int_{E_{2^{1/\theta_2}nR}\setminus
E_{nR}}\varphi^{\frac{p\pa{q+\alpha}}{q-p+1}}V^{-\frac{p+\alpha-1}{q-p+1}}|\nabla\eta_n|^{\frac{p\pa{q+\alpha}}{q-p+1}}d\mu
dt\\
 \nonumber&\qquad +
\intst{\abs{\partial_t\varphi}^{\frac{q+\alpha}{q-1}}
V^{-\frac{\alpha+1}{q-1}}} + \int\int_{E_{2^{1/\theta_2}nR}\setminus
E_{nR}}{\varphi^{\frac{q+\alpha}{q-1}}
V^{-\frac{\alpha+1}{q-1}}}|\partial_t
\eta_n|^{\frac{q+\alpha}{q-1}}d\mu dt \Big\}\\
 \nonumber&\quad\leq  C \Big\{
 \abs{\alpha}^{-\frac{(p-1)q}{q-p+1}}(I_1 + I_2 ) + I_3 + I_4
 \Big\}\,,
\end{align}
where
\begin{eqnarray}\label{I1}
I_1& := & \intst{\abs{\nabla
   \varphi}^{\frac{p\pa{q+\alpha}}{q-p+1}}V^{-\frac{p+\alpha-1}{q-p+1}}}\,,\\ \label{I2}
I_2& := & \int\int_{E_{2^{1/\theta_2}nR}\setminus
E_{nR}}\varphi^{\frac{p\pa{q+\alpha}}{q-p+1}}V^{-\frac{p+\alpha-1}{q-p+1}}|\nabla\eta_n|^{\frac{p\pa{q+\alpha}}{q-p+1}}d\mu
dt\,,\\ \label{I3} I_3& := &
\intst{\abs{\partial_t\varphi}^{\frac{q+\alpha}{q-1}}
V^{-\frac{\alpha+1}{q-1}}}\,,\\ \label{I4} I_4& :=
&\int\int_{E_{2^{1/\theta_2}nR}\setminus
E_{nR}}{\varphi^{\frac{q+\alpha}{q-1}}
V^{-\frac{\alpha+1}{q-1}}}|\partial_t
\eta_n|^{\frac{q+\alpha}{q-1}}d\mu dt\,.
\end{eqnarray}

In view of \eqref{cutoff} and \eqref{cutoff2} and assumption {\bf
HP1}-$(ii)$ (see \eqref{hp1b}) with $\eps=-\frac{\alpha}{q-p+1}>0$,
for every $n\in \mathbb N$ and every small enough $|\alpha|>0$ we
get
\begin{eqnarray}
\label{eI2} I_2& \leq &  \int\int_{E_{2^{1/\theta_2}nR}\setminus
E_{nR}}\left[\theta_2 \left(\frac
1{nR}\right)^{\theta_2}r(x)^{\theta_2-1}|\nabla
r(x)|\right]^{\frac{p(q+\alpha)}{q-p+1}}n^{C_1\theta_2\alpha\frac{p(q+\alpha)}{q-p+1}}V^{-\frac{p+\alpha-1}{q-p+1}}d\mu
dt\\
\nonumber&\leq & C
(nR)^{-\frac{\theta_2p(q+\alpha)}{q-p+1}}n^{C_1\theta_2\alpha\frac{p(q+\alpha)}{q-p+1}}\int\int_{E_{2^{1/\theta_2}nR}\setminus
E_{nR}}r(x)^{(\theta_2-1)\frac{p(q+\alpha)}{q-p+1}}V^{-\frac{p+\alpha-1}{q-p+1}}d\mu
dt\\
\nonumber&\leq & C
(nR)^{-\frac{\theta_2p(q+\alpha)}{q-p+1}}n^{C_1\theta_2\alpha\frac{p(q+\alpha)}{q-p+1}}
(nR)^{\frac{\theta_2 p q}{q-p+1}+C_0\frac{|\alpha|}{q-p+1}}(\log
(nR))^{s_4}\,.
\end{eqnarray}
Now note that for any constant $\bar C\in\erre$ and for $R>0$ and
$\alpha=-\frac 1{\log R}$ we have
\begin{equation}\label{eq9}
R^{|\alpha| \bar C}=e^{|\alpha|\bar C\log R}=e^{\bar C}\leq C\,.
\end{equation}
Thus, also using the fact that
\[\frac{|\alpha|[\theta_2 p - C_1\theta_2 p(q+\alpha)+C_0]}{q-p+1}\leq - \frac{|\alpha|}{q-p+1}\,<\,0\,,\]
from \eqref{eI2} we deduce
\begin{equation}\label{e2I2}
I_2 \leq C n^{-\frac{|\alpha|}{q-p+1}}[\log(n R)]^{s_4}\,.
\end{equation}
In a similar way we can estimate $I_4$, using {\bf HP1}-$(i)$ (see
\eqref{hp1a}). Indeed, for $R>0$ large enough,
\begin{eqnarray}
\label{eI4} I_4&\leq & \int\int_{E_{2^{1/\theta_2}nR}\setminus
E_{nR}}\left(\frac{\theta_1}{(n R)^{\theta_2}}
t^{\theta_1-1}\right)^{\frac{q+\alpha}{q-1}}n^{C_1\theta_2\alpha\frac{q+\alpha}{q-1}}
V^{\frac{-1+|\alpha|}{q-1}} d\mu dt \\
\nonumber& \leq & C (n R)^{-\theta_2\frac{q+\alpha}{q-1}}
n^{C_1\theta_2\alpha\frac{q+\alpha}{q-1}}(n R)^{\frac{\theta_2
q+C_0|\alpha|}{q-1}}[\log(nR)]^{s_2}\\
\nonumber&\leq & C n^{\frac{\alpha}{q-1}\left(-\theta_2 +
C_1\theta_2 q- C_0 +C_1\theta_2
\alpha\right)}[\log(nR)]^{s_2}\,\,\leq\,\, C
n^{-\frac{|\alpha|}{q-1}}[\log(nR)]^{s_2}\,.
\end{eqnarray}

In order to estimate $I_1$ we observe that if $f:[0,\infty)\to
[0,\infty)$ is a nonincreasing function and if {\bf HP1}-$(ii)$
holds (see \eqref{hp1b}), then
\begin{equation}\label{eq6}
\int\int_{E_R^c}
f\big([r(x)^{\theta_2}+t^{\theta_1}]^{\frac1{\theta_2}}\big)r(x)^{(\theta_2-1)p\left(\frac
q{q-p+1}-\eps\right)}V^{-\frac{p-1}{q-p+1}+\eps}d\mu dt\leq
C\int_{R/2^{1/\theta_2}}^{\infty} f(z) z^{\bar s_3+ C_0 \eps
-1}(\log z)^{s_4} dz\,,
\end{equation}
for every $0<\eps<\eps_0$ and $R>0$ large enough. This can shown by
minor variations in the proof of \cite[formula (2.19)]{GrigS}.

Now, since for a.e. $x\in M$ we have $|\nabla r(x)|\leq 1$, we
obtain for a.e. $(x,t)\in S$
\begin{equation}\label{eq52}
|\nabla
\varphi(x,t)|\leq C_1
|\alpha|\theta_2\left(\frac{r(x)^{\theta_2}+t^{\theta_1}}{R^{\theta_2}}
\right)^{C_1\alpha-1}\frac{r(x)^{\theta_2-1}}{R^{\theta_2}}\,.
\end{equation}
Thus, using \eqref{eq9} for every sufficiently large $R>0$ we get
\begin{eqnarray*}
|\alpha|^{-\frac{(p-1)q}{q-p+1}} I_1 & \leq &  C
|\alpha|^{-\frac{(p-1)q}{q-p+1}}\int\int_{E_R^c}
V^{-\frac{p+\alpha-1}{q-p+1}}\left[ C_1
|\alpha|\theta_2\left(\frac{r(x)^{\theta_2}+t^{\theta_1}}{R^{\theta_2}}
\right)^{C_1\alpha-1}\frac{r(x)^{\theta_2-1}}{R^{\theta_2}}\right]^{\frac{p(q+\alpha)}{q-p+1}}d\mu
dt\\
&\leq & C
|\alpha|^{\frac{p(q+\alpha)-(p-1)q}{q-p+1}}\int\int_{E_R^c}\left\{\left[r(x)^{\theta_2}+t^{\theta_1}\right]^{\frac
1{\theta_2}}\right\}^{\theta_2\frac{(C_1\alpha-1)p(q+\alpha)}{q-p+1}}r(x)^{(\theta_2-1)p\frac{(q+\alpha)}{q-p+1}}V^{-\frac{p+\alpha-1}{q-p+1}}d\mu
dt\,.
\end{eqnarray*}
Now, using \eqref{eq6} with $\eps=\frac{|\alpha|}{q-p+1}$,
\begin{equation}\label{eq7}
|\alpha|^{-\frac{(p-1)q}{q-p+1}} I_1  \leq   C
|\alpha|^{\frac{p(q+\alpha)-(p-1)q}{q-p+1}}
\int_{R/2^{1/\theta_2}}^{\infty} z^{\theta_2(C_1\alpha
-1)\frac{p(q+\alpha)}{q-p+1}+\bar s_3 +
C_0\frac{|\alpha|}{q-p+1}-1}(\log z)^{s_4} dz\,.
\end{equation}
By our choice of $C_1$ and by the very definition of $\bar s_3$ we
have
\begin{equation*}
b:=\theta_2(C_1\alpha -1)\frac{p(q+\alpha)}{q-p+1}+\bar s_3 +
C_0\frac{|\alpha|}{q-p+1}\leq -\frac{|\alpha|}{q-p+1}\,.
\end{equation*} Then using the change of
variable $y:=|b|\log z$ in the right hand side of \eqref{eq7} we
obtain for $\alpha>0$ small enough
\begin{eqnarray}\label{eq8}
|\alpha|^{-\frac{(p-1)q}{q-p+1}} I_1 & \leq &  C
|\alpha|^{\frac{p(q+\alpha)-(p-1)q}{q-p+1}}\int_0^\infty
e^{-y}\left(\frac{y}{|b|}\right)^{s_4}\frac{dy}{|b|}\\
\nonumber &\leq & C
|\alpha|^{\frac{p(q+\alpha)-(p-1)q}{q-p+1}-s_4-1}\, \leq \, C
|\alpha|^{\frac{p-1}{q-p+1}-s_4}\,.
\end{eqnarray}

The term $I_3$ can be estimated similarly. Indeed, we start noting
that if $f:[0,\infty)\to [0,\infty)$ is nonincreasing function and
if {\bf HP1}-$(i)$ holds (see \eqref{hp1a}), then for any
sufficiently small $\eps>0$ and every large enough $R>0$ we get
\begin{equation}\label{eq8a}
\int\int_{E_R^c} f\big([r(x)^{\theta_2}+t^{\theta_1}]^{\frac
1{\theta_2}} \big)t^{(\theta_1-1)\left(\frac q{q-1}-\eps \right)}
V^{-\frac 1{q-1}+\eps} d\mu dt\leq C
\int_{R/2^{1/\theta_2}}^{\infty} f(z) z^{\bar s_1 + C_0\eps -1}(\log
z)^{s_2}dz\,;
\end{equation}
this can be shown by minor changes in the proof of \cite[formula
(2.19)]{GrigS}. Since for a.e. $(x,t)\in S$
\begin{equation}\label{eq30}
|\partial_t \varphi(x,t)|\leq C_1 |\alpha|\theta_1\left(\frac{r(x)^{\theta_2}+t^{\theta_1}}{R^{\theta_2}}
\right)^{C_1\alpha-1}\frac{t^{\theta_1-1}}{R^{\theta_2}},
\end{equation}
also using \eqref{eq9}, we have for every $R>0$ large enough
\begin{eqnarray*}
I_3& \leq & C
|\alpha|^{\frac{q+\alpha}{q-1}}\int\int_{E_R^c}\left[(r(x)^{\theta_2}+t^{\theta_1}
)^{\frac
1{\theta_2}}\right]^{\theta_2(C_1\alpha-1)\frac{q+\alpha}{q-1}}t^{(\theta_1-1)\frac{q+\alpha}{q-1}}V^{-\frac{1+\alpha}{q-1}}d\mu
dt\,.
\end{eqnarray*}
Now due to \eqref{eq8a} with $\eps=\frac{|\alpha|}{q-1}$
\begin{equation}\label{eq10}
I_3 \leq  C |\alpha|^{\frac{q+\alpha}{q-1}}
\int_{R/2^{1/\theta_2}}^\infty z^{\theta_2(C_1\alpha
-1)\frac{q+\alpha}{q-1}+ C_0\frac{|\alpha|}{q-1}+\bar s_1-1}(\log
z)^{s_2} dz\,.
\end{equation}
By our choice of $C_1$ and the very definition of $\bar s_1$ we
have that
\[\beta:= \theta_2(C_1\alpha -1 )\frac{q+\alpha}{q-1}+\bar s_1 + C_0\frac{|\alpha|}{q-1}\leq -\frac{|\alpha|}{q-1}\,.\]
Using the change of variable $y:=|\beta|\log z$ in \eqref{eq10} we
obtain
\begin{eqnarray}\label{eq11}
I_3 & \leq & C|\alpha|^{\frac q{q-1}}\int_0^\infty
e^{-y}\left(\frac{y}{|\beta|} \right)^{s_2} \frac{dy}{|\beta|}\\
\nonumber &\leq & C |\alpha|^{\frac 1{q-1}-s_2}\,.
\end{eqnarray}
Inserting \eqref{e2I2}, \eqref{eI4}, \eqref{eq8} and \eqref{eq11}
into \eqref{eq12} we obtain for every $n\in \mathbb N$ and every
sufficiently large $R>0$
\begin{eqnarray*}
\int\int_{E_R} u^{q+\alpha} V d\mu dt & \leq &
\intst{u^{q+\alpha}\varphi_n^s V} \\ &\leq & C
\left(|\alpha|^{\frac{p-1}{q-p+1}-s_4} +
|\alpha|^{-\frac{(p-1)q}{q-p+1}} n^{-\frac{|\alpha|}{p-q+1}}[\log(n
R)]^{s_4}+|\alpha|^{\frac
1{q-1}-s_2}+n^{-\frac{|\alpha|}{q-1}}[\log(nR)]^{s_2}\right)
\end{eqnarray*}
with $C$ independent of $n$ and $R$\,. Passing to the $\liminf$ as
$n\to\infty$ we deduce that
\begin{equation*}
\int\int_{E_R} u^{q+\alpha} V d\mu dt  \leq  C
\left(|\alpha|^{\frac{p-1}{q-p+1}-s_4} + |\alpha|^{\frac
1{q-1}-s_2}\right)\,.
\end{equation*}
Therefore, letting $R\to \infty$ (and thus $\alpha\to 0$), by
Fatou's lemma, we have
\begin{equation*}
\intst{ u^{q} V } =0
\end{equation*}
in view of our assumptions on $s_2,s_4$, which concludes the proof.
\end{proof}

\begin{proof}[\it Proof of Theorem \ref{thm2}]
  We claim that $u^q \in L^1\pa{S, V d\mu dt}$. To see this, we will show that
  \begin{equation}\label{eq17}
    \intst{u^q V} \leq A\pa{\intst{u^q V}}^\sigma + B
  \end{equation}
for some constants $A>0$, $B>0$, $0<\sigma<1$. In order to prove
\eqref{eq17} we consider \eqref{2.11} with $\varphi$ replaced by the
family of functions $\varphi_n$ defined in \eqref{eq13}, for any
fixed $s\geq \max\set{1, \frac{q+1}{q-1}, \frac{2 pq}{q-p+1}}$ and
$C_1
>\max\left\{\frac{1+C_0+\theta_2}{\theta_2},\frac{2(C_0+1)}{\theta_2(q-p+1)},\frac{2(C_0+1)}{\theta_2q(q-1)}\right\}$
with $C_0$, $\theta_2$ as in {\bf HP2} and with $R>0$ sufficiently
large and $\alpha=-\frac{1}{\log R}$. Thus we have
  \begin{align}
\label{eq14}&  \intst{\varphi_n^su^q V} \\
\nonumber& \leq C\left(
  |\alpha|^{-1-\frac{(p-1)q}{q-p+1}}\intst{{V^{-\frac{p+\alpha-1}{q-p+1}}\abs{\nabla\varphi_n}^{\frac{p(q+\alpha)}{q-p+1}}}}
  \right)^\frac{p-1}{p}\pa{\int\int_{E^c_R}\varphi_n^su^q
     V\,d\mu dt}^\frac{(1-\alpha)(p-1)}{qp}\\
\nonumber&\quad\times\pa{\int\int_{E^c_R}V^{-\frac{(1-\alpha)(p-1)}{q-(1-\alpha)(p-1)}}
     \abs{\nabla\varphi_n}^\frac{pq}{q-(1-\alpha)(p-1)}\,d\mu dt}^\frac{q-(1-\alpha)(p-1)}{qp}\\
\nonumber &\quad + C\left(|\alpha|^{-1}
  \intst{\abs{\partial_t\varphi_n}^{\frac{q+\alpha}{q-1}} V^{-\frac{\alpha+1}{q-1}}} \right)^\frac{p-1}{p}\pa{\int\int_{E^c_R}\varphi_n^su^q
     V\,d\mu dt}^\frac{(1-\alpha)(p-1)}{qp}\\
\nonumber&\quad\times\pa{\int\int_{E^c_R}V^{-\frac{(1-\alpha)(p-1)}{q-(1-\alpha)(p-1)}}
     \abs{\nabla\varphi_n}^\frac{pq}{q-(1-\alpha)(p-1)}\,d\mu
     dt}^\frac{q-(1-\alpha)(p-1)}{qp}\\
\nonumber &\quad +C\pa{\abs{\alpha}^{-\frac{(p-1)q}{q-p+1}}
\intst{\abs{\nabla\varphi_n}^{\frac{p\pa{q+\alpha}}{q-p+1}}
     V^{-\frac{p+\alpha-1}{q-p+1}}} + \intst{\abs{\partial_t\varphi_n}^{\frac{q+\alpha}{q-1}}
     V^{-\frac{\alpha+1}{q-1}}}}^{\frac{1}{q+\alpha}}\\
\nonumber &\quad\times\left(\intst{V^{-\frac
1{q+\alpha-1}}|\partial_t
  \varphi_n|^{\frac{q+\alpha}{q+\alpha-1}}}\right)^{\frac{q+\alpha-1}{q+\alpha}}\,.
\end{align}
Let us prove that for $R>0$ large enough, and thus for
$|\alpha|=\frac{1}{\log R}$ sufficiently small,
\begin{eqnarray}
\label{eq15a}
\limsup_{n\to\infty}\left(|\alpha|^{-\frac{(p-1)q}{q-p+1}}J_1\right)&\leq& C, \\
\label{eq15}
\limsup_{n\to\infty}\left(|\alpha|^{-\frac{(p-1)q}{q-(1-\alpha)(p-1)}}J_3\right)&\leq& C, \\
\label{eq16}
\limsup_{n\to\infty}J_2 &\leq& C, \\
\label{eq18} \limsup_{n\to\infty}J_4 &\leq& C,
\end{eqnarray}
for some $C>0$ independent of $\alpha$, where
\begin{eqnarray}\label{J1}
  J_1 &:=&\intst{{V^{-\frac{p+\alpha-1}{q-p+1}}\abs{\nabla\varphi_n}^{\frac{p(q+\alpha)}{q-p+1}}}}, \\
\label{J2}
  J_2 &:=& \intst{\abs{\partial_t\varphi_n}^{\frac{q+\alpha}{q-1}} V^{-\frac{\alpha+1}{q-1}}}, \\
\label{J3}
  J_3 &:=& \int\int_{E^c_R}V^{-\frac{(1-\alpha)(p-1)}{q-(1-\alpha)(p-1)}}
     \abs{\nabla\varphi_n}^\frac{pq}{q-(1-\alpha)(p-1)}\,d\mu dt, \\
\label{J4}
  J_4 &:=& \intst{V^{-\frac
1{q+\alpha-1}}|\partial_t
  \varphi_n|^{\frac{q+\alpha}{q+\alpha-1}}}.
\end{eqnarray}
Note that
\begin{equation}\label{eq19}
  J_1 \leq C(I_1+I_2),
\end{equation}
with $I_1$ and $I_2$ defined in \eqref{I1} and \eqref{I2},
respectively. Due to \eqref{hp2b} in {\bf HP2}-$(ii)$, by the same
arguments used to obtain \eqref{eq8} and \eqref{e2I2} with $s_4$
replaced by $\bar s_4$, for every $n\in \mathbb N, R>0$ large enough
and $\alpha=\frac 1{\log R}$ we have
\[|\alpha|^{-\frac{(p-1)q}{q-p+1}} J_1 \leq C\left(1+ |\alpha|^{-\frac{(p-1)q}{q-p+1}} n^{-\frac{|\alpha|}{q-p+1}}[\log(nR)]^{\bar s_4}\right)\,. \]
Letting $n\to \infty$ we get \eqref{eq15a}.

Next we observe that
\begin{equation}
  J_2 \leq C(I_3+I_4),
\end{equation}
with $I_3$ and $I_4$ defined in \eqref{I3} and \eqref{I4},
respectively. By the same computations used to obtain \eqref{eq11}
and \eqref{eI4}, with $s_2$ replaced by $\bar s_2$, we have for
every $n\in \mathbb N, R>0$ large enough and $\alpha=\frac 1{\log
R}$
\[J_2 \leq C\left(1+ n^{-\frac{|\alpha|}{q-1}}[\log(nR)]^{\bar s_2}\right)\,.\]
Again, letting $n\to \infty$ we obtain \eqref{eq16}.

We now proceed to estimate $J_4$; note that
\begin{equation}\label{eq21}
  J_4 \leq C(I_5+I_6),
\end{equation}
where
\begin{eqnarray}
  I_5 &:=& \intst{V^{-\frac
1{q+\alpha-1}}|\partial_t
  \varphi|^{\frac{q+\alpha}{q+\alpha-1}}}\\ I_6 &:=& \intst{V^{-\frac
1{q+\alpha-1}}\varphi^{\frac{q+\alpha}{q+\alpha-1}}|\partial_t
  \eta_n|^{\frac{q+\alpha}{q+\alpha-1}}}.
\end{eqnarray}
Due to \eqref{eq30} and \eqref{eq9}, we have for every $R>0$ large enough

\begin{align}\label{eq23}
I_5 \leq  C |\alpha|^{\frac{q+\alpha}{q+\alpha-1}}
\int\int_{E_R^c}\Big[(r(x)^{\theta_2}+t^{\theta_1})^{\frac
1{\theta_2}}&\Big]^{\theta_2(C_1\alpha-1)\left(\frac{q}{q-1}+\frac{|\alpha|}{(q+\alpha-1)(q-1)}\right)}
\\ \nonumber & \times
t^{(\theta_1-1)\left(\frac{q}{q-1}+\frac{|\alpha|}{(q+\alpha-1)(q-1)}\right)}
V^{-\frac{1}{q-1}-\frac{|\alpha|}{(q+\alpha-1)(q-1)}}d\mu dt\,.
\end{align}
Note that if $f:[0,\infty)\to [0,\infty)$ is a nonincreasing
function and if \eqref{hp2aa} in {\bf HP2}-$(i)$ holds, then for any
sufficiently small $\eps>0$ and every large enough $R>0$ we get
\begin{equation}\label{eq22}
\int\int_{E_R^c} f\big([r(x)^{\theta_2}+t^{\theta_1}]^{\frac
1{\theta_2}} \big)t^{(\theta_1-1)\left(\frac q{q-1}+\eps \right)}
V^{-\frac 1{q-1}-\eps} d\mu dt\leq C
\int_{R/2^{1/\theta_2}}^{\infty} f(z) z^{\bar s_1 + C_0\eps -1}(\log
z)^{\bar s_2}dz\,;
\end{equation}
this can be shown by minor changes in the proof of \cite[formula
(2.19)]{GrigS}. Now due to \eqref{eq23}, \eqref{eq22} with
$\eps=\frac{|\alpha|}{(q+\alpha-1)(q-1)}$
\begin{equation}\label{eq24}
I_5 \leq  C |\alpha|^{\frac{q+\alpha}{q+\alpha-1}}
\int_{R/2^{1/\theta_2}}^\infty z^{\theta_2(C_1\alpha
-1)\frac{q+\alpha}{q+\alpha-1}+
C_0\frac{|\alpha|}{(q+\alpha-1)(q-1)}+\bar s_1-1}(\log z)^{\bar s_2}
dz\,.
\end{equation}
By our choice of $C_1$ and the very definition of $\bar s_1$, we
have for sufficiently small $|\alpha|>0$
\[\gamma:=\theta_2(C_1\alpha -1)\frac{q+\alpha}{q+\alpha-1}+
C_0\frac{|\alpha|}{(q+\alpha-1)(q-1)}+\bar
s_1<-\frac{|\alpha|}{(q-1)^2}\,.
\]
Using the change of variable $y:=|\gamma|\log z$ in the right hand
side of \eqref{eq24}, due to the very definition of $\bar s_2$ we
obtain for every $R>0$ large enough
\begin{equation}\label{eq25}
I_5  \leq  C|\alpha|^{\frac {q+\alpha}{q+\alpha-1}}\int_0^\infty
e^{-y}\left(\frac{y}{|\gamma|} \right)^{\bar s_2}
\frac{dy}{|\gamma|}\leq  C \,.
\end{equation}
Moreover, using \eqref{eq9} and \eqref{hp2aa} in {\bf HP2}, for
every $n\in\enne$ and $\alpha>0$ sufficiently small, we have
\begin{eqnarray}
\label{eq26} I_6&\leq & \int\int_{E_{2^{1/\theta_2}nR}\setminus
E_{nR}}\left(\frac{\theta_1}{(n R)^{\theta_2}}
t^{\theta_1-1}\right)^{\frac{q+\alpha}{q+\alpha-1}}n^{C_1\theta_2\alpha\frac{q+\alpha}{q+\alpha-1}}
V^{-\frac{1}{q-1}-\frac{|\alpha|}{(q+\alpha-1)(q-1)}} d\mu dt \\
\nonumber& \leq & C (n R)^{-\theta_2\frac{q+\alpha}{q+\alpha-1}}
n^{C_1\theta_2\alpha\frac{q+\alpha}{q+\alpha-1}}(n
R)^{\frac{\theta_2
q}{q-1}+ C_0\frac{|\alpha|}{(q+\alpha-1)(q-1)}}[\log(nR)]^{\bar s_2}\\
\nonumber&\leq & C n^{-\frac{|\alpha|}{(q-1)^2}}[\log(nR)]^{\bar
s_2}\,.
\end{eqnarray}
In view of \eqref{eq21}, \eqref{eq25}, \eqref{eq26} we obtain
\[J_4\leq C \left( 1 + n^{-\frac{|\alpha|}{(q-1)^2}}[\log(nR)]^{\bar
s_2} \right)\,.\] Letting $n\to \infty$ we get \eqref{eq18}.


In order to estimate the integral $J_3$ we start by defining
$\Lambda = \frac{(p-1)q
|\alpha|}{(q-p+1)\sq{q-\pa{1-\alpha}\pa{p-1}}}$, and we note that
\begin{equation}\label{eq40}
   \frac{(p-1)q}{\pa{q-p+1}^2}|\alpha|<\Lambda<\frac{2(p-1)q}{\pa{q-p+1}^2}|\alpha|<\eps^*
\end{equation}
for every small enough $|\alpha|>0$, and that
  \[\frac{(1-\alpha)(p-1)}{q-\pa{1-\alpha}\pa{p-1}}=\bar s_4+\Lambda
  \qquad\text{ and }\qquad\frac{p q}{q-\pa{1-\alpha}\pa{p-1}}=\frac{\bar s_3}{\theta_2} +\Lambda
  p\,.
  \]
By our definition of the functions $\varphi_n$, for every
$n\in\enne$ and every small enough $|\alpha|>0$ we have
\begin{align}
\label{eq41}  J_3& =\intst{V^{-\bar s_4-\Lambda}\abs{\nabla
\varphi_n}^{\frac{\bar s_3}{\theta_2}+\Lambda p}}\\ \nonumber & \leq
    C\sq{\intst{V^{-\bar s_4-\Lambda}{\eta_n}^{\frac{\bar s_3}{\theta_2}+\Lambda p}\abs{\nabla \varphi}^{\frac{\bar s_3}{\theta_2}+\Lambda
    p}}
      +\intst{V^{-\bar s_4-\Lambda} \varphi^{\frac{\bar s_3}{\theta_2}+\Lambda p}\abs{\nabla \eta_n}^{\frac{\bar s_3}{\theta_2}+\Lambda p}}}\\
  \nonumber  &\leq C\sq{\int\int_{E_R^c} V^{-\bar s_4-\Lambda} \abs{\nabla \varphi}^{\frac{\bar s_3}{\theta_2}+\Lambda
  p}\, d\mu dt
      +\int\int_{E_{2^{1/\theta_2}nR}\setminus E_{nR}} V^{-\bar s_4-\Lambda} \varphi^{\frac{\bar s_3}{\theta_2}+\Lambda p}\abs{\nabla\eta_n}^{\frac{\bar s_3}{\theta_2}+\Lambda p}\,d\mu dt}\\
   \nonumber &:=C\pa{I_7+I_8}.
\end{align}
Now we use condition \eqref{hp2bb} in {\bf HP2}-$(ii)$ with
$\eps=\Lambda$, and we obtain for every $n\in\enne$ and $R>0$ large
enough
   \begin{align*}
    I_8 & = \int\int_{E_{2^{1/\theta_2}nR}\setminus E_{nR}} V^{-\bar s_4-\Lambda} \varphi^{\frac{\bar s_3}{\theta_2}+\Lambda p}\abs{\nabla \eta_n}^{\frac{\bar s_3}{\theta_2}+\Lambda
    p}\,d\mu dt \\
       &\leq \pa{\sup_{E_{2^{1/\theta_2}nR}\setminus E_{nR}}\varphi}^{\frac{\bar s_3}{\theta_2}+\Lambda p}\int\int_{E_{2^{1/\theta_2}nR}\setminus E_{nR}}\pa{\frac{\theta_2 r(x)^{\theta_2-1}}
       {(nR)^{\theta_2}}}^{\frac{\bar s_3}{\theta_2}+\Lambda p}
           V^{-\bar s_4-\Lambda}\,d\mu dt \\
    &\leq C n^{\frac{C_1\theta_2\alpha pq}{q-(1-\alpha)(p-1)}}(nR)^{-\frac{\theta_2 pq}{q-(1-\alpha)(p-1)}}\int\int_{E_{2^{1/\theta_2}nR}\setminus E_{nR}}
    r(x)^{(\theta_2-1)p\left(\frac{q}{q-p+1}+\Lambda\right)}V^{-\frac{p-1}{q-p+1}-\Lambda}d\mu
dt  \\
    &\leq C n^{\frac{C_1\theta_2\alpha pq}{q-(1-\alpha)(p-1)}}(nR)^{-\frac{\theta_2 pq}{q-(1-\alpha)(p-1)}} (nR)^{\frac{\theta_2 pq}{q-p+1} + C_0\Lambda} (\log(nR))^{\bar s_4} .
   \end{align*}
By our definition of $C_1,\Lambda$ and by relation \eqref{eq40} we
easily find
   \begin{align}
     \label{eq42}  & \frac{C_1\theta_2\alpha pq}{q-(1-\alpha)(p-1)} -\frac{\theta_2 pq}{q-(1-\alpha)(p-1)}+ \frac{\theta_2 pq}{q-p+1} + C_0\Lambda
      \\ \nonumber & \qquad\qquad < \frac{pq \alpha C_1\theta_2}{q-\pa{1-\alpha}\pa{p-1}}-\frac{ \alpha q(p-1)C_0}{\sq{q-\pa{1-\alpha}\pa{p-1}}\pa{q-p+1}} \\
     \nonumber& \qquad\qquad \leq \frac{q \alpha p}{\sq{q-\pa{1-\alpha}\pa{p-1}}\pa{q-p+1}} < \frac{q \alpha p}{\pa{q-p+1}^2} <0,
   \end{align}
for any small enough $|\alpha|>0$. Moreover by \eqref{eq9}, since
$\alpha=-\frac{1}{\log R}$, we have
\[
R^{-\frac{\theta_2 pq}{q-(1-\alpha)(p-1)}+\frac{\theta_2
pq}{q-p+1} + C_0\Lambda }\leq C\,.
\]
Thus, for any sufficiently large $R>0$ and every $n\in\enne$,
\begin{equation}\label{eq44}
    I_8 \leq C n^{ \frac{q \alpha p}{\pa{q-p+1}^2}}\pa{\log\pa{nR}}^{\bar s_4}.
\end{equation}
In order to estimate $I_7$ we observe that if $f:[0,\infty)\to
[0,\infty)$ is a nonincreasing function and if {\bf HP2}-$(ii)$
holds (see \eqref{hp2bb}), then
\begin{equation}\label{eq43}
\int\int_{E_R^c}
f\big([r(x)^{\theta_2}+t^{\theta_1}]^{\frac1{\theta_2}}\big)r(x)^{(\theta_2-1)p\left(\frac
q{q-p+1}+\eps\right)}V^{-\frac{p-1}{q-p+1}-\eps}\,d\mu dt\leq
C\int_{R/2^{1/\theta_2}}^{\infty} f(z) z^{\bar s_3+ C_0 \eps
-1}(\log z)^{\bar s_4}\, dz\,,
\end{equation}
for every $0<\eps<\eps_0$ and $R>0$ large enough. This can againbe
shown by minor variations in the proof of \cite[formula
(2.19)]{GrigS}. Thus, similarly to \eqref{eq23} and \eqref{eq24},
using \eqref{eq9}, \eqref{eq40} and \eqref{eq43}, we have for $R>0$
large enough and $\alpha=-\frac{1}{\log R}$
\begin{align}
\label{eq45}I_7 & \leq C |\alpha|^{\frac{p
q}{q-(1-\alpha)(p-1)}}\int\int_{E_R^c}
\big[(r(x)^{\theta_2}+t^{\theta_1})^{\frac 1{\theta_2}}
\big]^{\theta_2 (C_1\alpha -1)\frac{p
q}{q-(1-\alpha)(p-1)}}r(x)^{(\theta_2-1)p\left(\frac
q{q-p+1}+\Lambda \right)} V^{-\frac{p-1}{q-p+1}-\Lambda} \, d\mu dt
\\ \nonumber & \leq C |\alpha|^{\frac{p
q}{q-(1-\alpha)(p-1)}}\int_{R/2^{1/\theta_2}}^\infty z^{\theta_2
(C_1\alpha -1)\frac{p
q}{q-(1-\alpha)(p-1)}+\bar{s}_3+C_0\Lambda-1}(\log
z)^{\bar{s}_4}\,dz.
\end{align}
By our choice of $C_1$ and the definition of $\bar{s}_3$ and
$\Lambda$ we have
\[
a:=\theta_2 (C_1\alpha -1)\frac{p
q}{q-(1-\alpha)(p-1)}+\bar{s}_3+C_0\Lambda<-\frac{qp|\alpha|}{(q-p+1)^2}<0,
\]
thus using the change of variable $y=|a|\log z$ in the last integral
in \eqref{eq45} we obtain
\begin{align}
\label{eq46}I_7 &\leq\,\, C |\alpha|^{\frac{p
q}{q-(1-\alpha)(p-1)}}\int_0^\infty
e^{-y}\left(\frac{y}{|a|}\right)^{\bar{s}_4}\,\frac{dy}{|a|}\\
\nonumber&\leq\,\, C |\alpha|^{\frac{p
q}{q-(1-\alpha)(p-1)}-\bar{s}_4-1}\,\,=\,\,C
|\alpha|^{\frac{(p-1)q}{q-(1-\alpha)(p-1)}+\frac{(p-1)q|\alpha
|}{(q-p+1)(q-(1-\alpha)(p-1))}}\,.
\end{align}
Thus for any sufficiently large $R>0$ and every $n\in\enne$, by
\eqref{eq41}, \eqref{eq44} and \eqref{eq46}
\[
|\alpha|^{-\frac{(p-1)q}{q-(1-\alpha)(p-1)}}J_3\leq
C|\alpha|^{-\frac{(p-1)q}{q-(1-\alpha)(p-1)}}\left(|\alpha|^{\frac{(p-1)q}{q-(1-\alpha)(p-1)}+\frac{(p-1)q|\alpha
|}{(q-p+1)(q-(1-\alpha)(p-1))}}+n^{ \frac{q \alpha
p}{\pa{q-p+1}^2}}\pa{\log\pa{nR}}^{\bar s_4}\right)\,.
\]
Letting $n\to \infty$, for every $R>0$ large enough and
$\alpha=-\frac{1}{\log R}$ we obtain
\[
|\alpha|^{-\frac{(p-1)q}{q-(1-\alpha)(p-1)}}J_3\leq
C|\alpha|^{\frac{(p-1)q|\alpha |}{(q-p+1)(q-(1-\alpha)(p-1))}}\leq
C,
\]
that is \eqref{eq15}.

Now using \eqref{eq15a}--\eqref{eq18} in \eqref{eq14}, since
$\varphi_n\equiv1$ on $E_R$ and $0\leq\varphi_n\leq1$ on
$M\times[0,\infty)$, for every $R>0$ large enough we have
\[
\int\int_{E_R}u^qV\,d\mu
dt\,\,\leq\,\,\limsup_{n\to\infty}\left(\intst{\varphi_n^su^qV}\right)\,\,\leq
A\left(\intst{u^qV}\right)^\sigma+B
\]
for some positive constants $A,B$ and $\sigma\in(0,1)$. Passing to
the limit as $R\to\infty$ we obtain \eqref{eq17}, and hence we
conclude that $u^q \in L^1\pa{S, V d\mu dt}$ as claimed.

Next we want to show that
\[
\intst{u^qV}=0,
\]
and thus that $u=0$ a.e., since $V>0$ a.e. on $M\times[0,\infty)$.
To this aim, we consider \eqref{eq47} with $\varphi$ replaced by the
family of functions $\varphi_n$. Since $\varphi_n\equiv1$ on $E_R$
and since $0\leq\varphi_n\leq1$ on $M\times[0,\infty)$, for every
$n\in\enne$, every $R>0$ large enough and $\alpha=-\frac{1}{\log R}$
we have
\begin{align}
  \label{eq48}&\int\int_{E_R}u^q V\,d\mu dt\,\,\leq\,\,\intst{\varphi_n^su^q V}\\
  \nonumber&\quad\leq C
     \pa{\abs{\alpha}^{-1-\frac{(p-1)q}{(q-p+1)}}\intst{{V^{-\frac{p+\alpha-1}{q-p+1}}\abs{\nabla\varphi_n}^{\frac{p(q+\alpha)}{q-p+1}}}}+|\alpha|^{-1} \intst{\abs{\partial_t\varphi_n}^{\frac{q+\alpha}{q-1}} V^{-\frac{\alpha+1}{q-1}}} }^\frac{p-1}{p}\\
  \nonumber&\qquad\times\pa{\int\int_{E^c_R} V^{-\frac{(1-\alpha)(p-1)}{q-(1-\alpha)(p-1)}}
     \abs{\nabla\varphi_n}^\frac{pq}{q-(1-\alpha)(p-1)}\,d\mu dt}^\frac{q-(1-\alpha)(p-1)}{pq}\pa{\int\int_{E^c_R}u^q V\,d\mu dt}^\frac{(1-\alpha)(p-1)}{pq} \\
  \nonumber&\qquad+C\left(\int\int_{E^c_R}u^{q}V\,d\mu dt\right)^\frac{1}{q}\pa{\intst{V^{-\frac{1}{q-1}}\abs{\partial_t\varphi_n}^{\frac{q}{q-1}}}}^{\frac{q-1}{q}}.
\end{align}
Now we claim that for $R>0$ sufficiently large
\begin{equation}\label{eq50}
\limsup_{n\to\infty}J_5\leq C,
\end{equation}
where
\[
J_5:=\intst{V^{-\frac{1}{q-1}}\abs{\partial_t\varphi_n}^{\frac{q}{q-1}}}.
\]
This can be shown similarly to inequality \eqref{eq18}. Indeed
\begin{equation}\label{eq71}
  J_5 \leq C(I_9+I_{10}),
\end{equation}
where
\begin{equation*}
  I_9 \,:=\, \intst{V^{-\frac
1{q-1}}|\partial_t
  \varphi|^{\frac{q}{q-1}}}\,,\qquad I_{10}\, :=\, \intst{V^{-\frac
1{q-1}}\varphi^{\frac{q}{q-1}}|\partial_t
  \eta_n|^{\frac{q}{q-1}}}.
\end{equation*}
By \eqref{eq30} and \eqref{eq9}, for $R>0$ sufficiently large
\begin{align}\label{eq73}
I_9 \leq  C |\alpha|^{\frac{q}{q-1}}
\int\int_{E_R^c}\Big[(r(x)^{\theta_2}+t^{\theta_1})^{\frac
1{\theta_2}}&\Big]^{\theta_2(C_1\alpha-1)\frac{q}{q-1}}
t^{(\theta_1-1)\frac{q}{q-1}} V^{-\frac{1}{q-1}}d\mu dt\,.
\end{align}
Now note that if $f:[0,\infty)\to [0,\infty)$ is a nonincreasing
function and if \eqref{hp2aa} in {\bf HP2}-$(i)$ holds, then for
every $R>0$ sufficiently large we get
\begin{equation}\label{eq72}
\int\int_{E_R^c} f\big([r(x)^{\theta_2}+t^{\theta_1}]^{\frac
1{\theta_2}} \big)t^{(\theta_1-1)\frac q{q-1}} V^{-\frac 1{q-1}}
d\mu dt\leq C \int_{R/2^{1/\theta_2}}^{\infty} f(z) z^{\bar
s_1-1}(\log z)^{\bar s_2}dz\,;
\end{equation}
indeed, the proof of \eqref{eq72} is similar to that of
\cite[formula (2.19)]{GrigS}, where here one uses condition
\eqref{hp2aa} with $\eps=0$, see also Remark \ref{rem1}. Then
\begin{align}\label{eq74}
I_9 &\leq  C |\alpha|^{\frac{q}{q-1}} \int_{R/2^{1/\theta_2}}^\infty
z^{\theta_2(C_1\alpha -1)\frac{q}{q-1} +\bar s_1-1}(\log z)^{\bar
s_2} dz\\
\nonumber&\leq C |\alpha|^{\frac{q}{q-1}} \int_1^\infty
z^{\frac{\theta_2C_1\alpha q}{q-1}}(\log z)^{\bar
s_2}\,\frac{dz}{z}\,\leq\, C |\alpha|^{\frac{q}{q-1}-\bar
s_2-1}\,\leq\, C\,.
\end{align}
Moreover, for every $n\in\enne$ by \eqref{eq9} and by \eqref{hp2a}
with $\eps=0$, see also Remark \ref{rem1},
\begin{eqnarray}
\label{eq76} I_{10}&\leq & \int\int_{E_{2^{1/\theta_2}nR}\setminus
E_{nR}}\left(\frac{\theta_1}{(n R)^{\theta_2}}
t^{\theta_1-1}\right)^{\frac{q}{q-1}}n^{C_1\theta_2\alpha\frac{q}{q-1}}
V^{-\frac{1}{q-1}} d\mu dt \\
\nonumber& \leq & C (n R)^{-\theta_2\frac{q}{q-1}}
n^{C_1\theta_2\alpha\frac{q}{q-1}}(n R)^{\frac{\theta_2
q}{q-1}}[\log(nR)]^{\bar s_2}\,=\, C
n^{-\frac{C_1\theta_2q}{q-1}|\alpha|}[\log(nR)]^{\bar s_2}\,.
\end{eqnarray}
In view of \eqref{eq71}, \eqref{eq74}, \eqref{eq76} we have
\[J_5\leq C \left( 1 + n^{-\frac{C_1\theta_2q}{q-1}|\alpha|}[\log(nR)]^{\bar
s_2} \right)\,.\] Letting $n\to \infty$ we get our claim, inequality
\eqref{eq50}.

Now consider again \eqref{eq48}; passing to the limsup as
$n\to\infty$ and using \eqref{eq15a}--\eqref{eq16} and \eqref{eq50},
we obtain for some constant $C>0$
\begin{equation}
  \label{eq49}\int\int_{E_R}u^q V\,d\mu dt \leq
  C\sq{\pa{\int\int_{E^c_R}u^q V\,d\mu dt}^\frac{(1-\alpha)(p-1)}{pq}+\left(\int\int_{E^c_R}u^{q}V\,d\mu
  dt\right)^\frac{1}{q}}\,.
\end{equation}
Now we can pass to the limit in \eqref{eq49} as $R\to\infty$, and
thus as $\alpha\to 0$, and conclude by using Fatou's Lemma and the
fact that $u^q\in L^1(S,Vd\mu dt)$ that
\[
\intst{u^qV}=0.
\]
Thus $u=0$ a.e. on $M\times[0,\infty)$.

\end{proof}

\bigskip

\section{Proof of Corollaries \ref{cor1}, \ref{cor2} and
\ref{cor3}}\label{sec4}

\begin{proof}[\it Proof of Corollary \ref{cor1}\,.]
We now show that under our assumptions hypothesis {\bf HP1} is
satisfied (see conditions \eqref{hp1a} and \eqref{hp1b}). Observe
that for small $\eps>0$
\begin{equation*}
\int\int_{E_{2^{1/\theta_2}R}\setminus E_R}
t^{(\theta_1-1)\left(\frac q{q-1}-\eps \right)}\,dx dt \,\leq\, C
R^m
\int_0^{2^{1/\theta_1}R^{\frac{\theta_2}{\theta_1}}}t^{(\theta_1-1)\left(\frac
q{q-1}-\eps \right)} dt \, \leq\,  C R^m
R^{\frac{\theta_2}{\theta_1}\left[(\theta_1-1)\left(\frac
q{q-1}-\eps \right)+1\right]}\,.
\end{equation*}
Hence, condition \eqref{hp1a} is satisfied, if
\begin{equation}\label{eq53}
\frac{\theta_2}{\theta_1}\geq (q-1) m \,.
\end{equation}
On the other hand, for small $\eps>0$,
\begin{align*}
\int\int_{E_{2^{1/\theta_2}R}\setminus E_R}
|x|^{(\theta_2-1)p\left(\frac q{q-p+1}-\eps \right)}\,dx dt &\leq C
R^{\frac{\theta_2}{\theta_1}}
\int_0^{2^{1/\theta_2}R}\varrho^{(\theta_2-1)p\left(\frac
q{q-p+1}-\eps \right)+m-1} d\varrho \\
\nonumber & \leq  C R^{\frac{\theta_2}{\theta_1}+
\left[(\theta_2-1)p\left(\frac q{q-p+1}-\eps \right)+m\right]}\,.
\end{align*}
Therefore condition \eqref{hp1b} is satisfied, if
\begin{equation}\label{eq54}
\frac{\theta_2}{\theta_1} \leq \frac{p q}{q-p+1}-m\,.
\end{equation}
Now note that we can find $\theta_1\geq 1, \theta_2\geq 1$ such that
conditions \eqref{eq53} and \eqref{eq54} hold simultaneously, if
\eqref{eq55} holds. Thus, from Theorem \ref{thm1} the conclusion
follows.
\end{proof}

\begin{proof}[\it Proof of Corollary \ref{cor2}\,.]
Under our assumptions, for $R>0$ large and $\eps>0$ small enough we
have
\[
\int\int_{E_{2^{1/\theta_2R}}\setminus
E_{R}}t^{(\theta_1-1)\left(\frac{q}{q-1}-\eps\right)}V^{-\frac{1}{q-1}+\eps}\,d\mu
dt\leq
CR^{\frac{\theta_2}{\theta_1}(\theta_1-1)\left(\frac{q}{q-1}-\eps\right)+\frac{\theta_2}{\theta_1}\alpha\eps+\beta\eps+\frac{\theta_2}{\theta_1}\sigma_2+\sigma_1}(\log
R)^{\delta_1+\delta_2}.
\]
Hence condition \eqref{hp1a} in {\bf HP1} is satisfied if we choose
$C_0\geq\max\left\{0,\frac{\theta_2}{\theta_1}\left(\alpha+1\right)+\beta-\theta_2\right\}$
and if
\begin{equation}\label{eq57}
\frac{\theta_2}{\theta_1}\left(\sigma_2-\frac{q}{q-1}\right)+\sigma_1\leq0\,,\qquad
\delta_1+\delta_2<\frac{1}{q-1}\,.
\end{equation}
Similarly for sufficiently large $R>0$ and small $\eps>0$ we have
\begin{equation*}
\int\int_{E_{2^{1/\theta_2}R}\setminus E_R}
r(x)^{(\theta_2-1)p\left(\frac{q}{q-p+1}-\eps\right)}V^{-\frac{p-1}{q-p+1}+\eps}d\mu
dt\leq
CR^{(\theta_2-1)p\left(\frac{q}{q-p+1}-\eps\right)+\frac{\theta_2}{\theta_1}\alpha\eps+\beta\eps+\frac{\theta_2}{\theta_1}\sigma_4+\sigma_3}(\log
R)^{\delta_3+\delta_4}\,.
\end{equation*}
Therefore condition \eqref{hp1b} in {\bf HP1} is satisfied  if
$C_0\geq\max\left\{0,\beta+\frac{\theta_2}{\theta_1}\alpha-(\theta_2-1)p\right\}$
and if
\begin{equation}\label{eq58}
\left(-\frac{pq}{q-p+1}+\sigma_3\right)+\frac{\theta_2}{\theta_1}\sigma_4\leq0\,,\qquad
\delta_3+\delta_4<\frac{p-1}{q-p+1}\,.
\end{equation}
Now for conditions \eqref{eq57} and \eqref{eq58} to be satisfied, by
our assumptions it is sufficient to choose $\theta_1\geq1$,
$\theta_2\geq1$ such that
\begin{eqnarray}
\label{51}&&\sigma_1\left(\frac{q}{q-1}-\sigma_2\right)^{-1}\,\leq\,\frac{\theta_2}{\theta_1}\qquad\textrm{
if }0\leq\sigma_2<\frac{q}{q-1}\,,\\
\label{52}&&\frac{\theta_2}{\theta_1}\,\leq\,\left(\frac{pq}{q-p+1}-\sigma_3\right)\sigma_4^{-1}\qquad\textrm{
if }0\leq\sigma_3<\frac{pq}{q-p+1}\,.
\end{eqnarray}
Thus we can apply Theorem \ref{thm1} and conclude.
\end{proof}

\begin{proof}[\it Proof of Corollary \ref{cor3}\,.]
By our assumptions for large $R>0$ and small $\eps>0$ we have
\begin{eqnarray*}
\int\int_{E_{2^{1/\theta_2}R}\setminus
E_{R}}t^{(\theta_1-1)\left(\frac{q}{q-1}-\eps\right)}V^{-\frac{1}{q-1}+\eps}\,d\mu
dt&\leq&
CR^{\frac{\theta_2}{\theta_1}(\theta_1-1)\left(\frac{q}{q-1}-\eps\right)+\frac{\theta_2}{\theta_1}\alpha\eps+\beta\eps+\frac{\theta_2}{\theta_1}\sigma_2+\sigma_1}(\log
R)^{\delta_1+\delta_2}\,,\\
\int\int_{E_{2^{1/\theta_2}R}\setminus
E_{R}}t^{(\theta_1-1)\left(\frac{q}{q-1}+\eps\right)}V^{-\frac{1}{q-1}-\eps}\,d\mu
dt&\leq&
CR^{\frac{\theta_2}{\theta_1}(\theta_1-1)\left(\frac{q}{q-1}+\eps\right)+\frac{\theta_2}{\theta_1}\alpha\eps+\beta\eps+\frac{\theta_2}{\theta_1}\sigma_2+\sigma_1}(\log
R)^{\delta_1+\delta_2}\, .
\end{eqnarray*}
Thus conditions \eqref{hp2a}--\eqref{hp2aa} of {\bf HP2} are
satisfied if we choose
$C_0\geq\max\left\{0,\frac{\theta_2}{\theta_1}\left(\alpha-1\right)+\beta+\theta_2\right\}$
and
\begin{equation}\label{eq61}
\frac{\theta_2}{\theta_1}\left(\sigma_2-\frac{q}{q-1}\right)+\sigma_1\leq0\,,\qquad
\delta_1+\delta_2\leq\frac{1}{q-1}\,.
\end{equation}
Similarly if $R>0$ is large and $\eps>0$ is small enough we have
\begin{eqnarray*}
\int\int_{E_{2^{1/\theta_2}R}\setminus E_R}
r(x)^{(\theta_2-1)p\left(\frac{q}{q-p+1}-\eps\right)}V^{-\frac{p-1}{q-p+1}+\eps}d\mu
dt&\leq&
CR^{(\theta_2-1)p\left(\frac{q}{q-p+1}-\eps\right)+\frac{\theta_2}{\theta_1}\alpha\eps+\beta\eps+\frac{\theta_2}{\theta_1}\sigma_4+\sigma_3}(\log
R)^{\delta_3+\delta_4}\,,\\
\int\int_{E_{2^{1/\theta_2}R}\setminus E_R}
r(x)^{(\theta_2-1)p\left(\frac{q}{q-p+1}+\eps\right)}V^{-\frac{p-1}{q-p+1}-\eps}d\mu
dt&\leq&
CR^{(\theta_2-1)p\left(\frac{q}{q-p+1}+\eps\right)+\frac{\theta_2}{\theta_1}\alpha\eps+\beta\eps+\frac{\theta_2}{\theta_1}\sigma_4+\sigma_3}(\log
R)^{\delta_3+\delta_4}\,.
\end{eqnarray*}
Thus conditions \eqref{hp2b}--\eqref{hp2bb} in {\bf HP2} are
satisfied if
$C_0\geq\max\left\{0,\beta+\frac{\theta_2}{\theta_1}\alpha+(\theta_2-1)p\right\}$
and
\begin{equation}\label{eq62}
\left(-\frac{pq}{q-p+1}+\sigma_3\right)+\frac{\theta_2}{\theta_1}\sigma_4\leq0\,,\qquad
\delta_3+\delta_4\leq\frac{p-1}{q-p+1}\,.
\end{equation}
Hence, arguing as in the proof of Corollary \ref{cor2}, we have that
under our assumptions {\bf HP2} holds, and we can apply Theorem
\ref{thm2} to conclude.
\end{proof}

We conclude with the next example, where we show that our results
extend those in \cite{Zhang} in the case of the Laplace--Beltrami
operator on a complete noncompact manifold $M$.

Let us start by fixing a point $o\in M$ and denote by
$\textrm{Cut}(o)$ the {\it cut locus} of $o$. For any $x\in
M\setminus \big[\textrm{Cut}(o)\cup \{o\} \big]$, one can define the
{\it polar coordinates} with respect to $o$, see e.g. \cite{Grig}.
Namely, for any point $x\in M\setminus \big[\textrm{Cut}(o)\cup
\{o\} \big]$ there correspond a polar radius $r(x) := dist(x, o)$
and a polar angle $\theta\in \mathbb S^{m-1}$ such that the shortest
geodesics from $o$ to $x$ starts at $o$ with the direction $\theta$
in the tangent space $T_oM$. Since we can identify $T_o M$ with
$\mathbb R^m$, $\theta$ can be regarded as a point of $\mathbb
S^{m-1}.$

The Riemannian metric in $M\setminus\big[\textrm{Cut}(o)\cup \{o\}
\big]$ in polar coordinates reads
\[ds^2 = dr^2+A_{ij}(r, \theta)d\theta^i d\theta^j, \]
where $(\theta^1, \ldots, \theta^{m-1})$ are coordinates in
$\mathbb S^{m-1}$ and $(A_{ij})$ is a positive definite matrix. It
is not difficult to see that the Laplace-Beltrami operator in
polar coordinates has the form
\[
\Delta = \frac{\partial^2}{\partial r^2} + \mathcal F(r,
\theta)\frac{\partial}{\partial r}+\Delta_{S_{r}},
\]
where $\mathcal F(r, \theta):=\frac{\partial}{\partial
r}\big(\log\sqrt{A(r,\theta)}\big)$, $A(r,\theta):=\det
(A_{ij}(r,\theta))$, $\Delta_{S_r}$ is the Laplace-Beltrami
operator on the submanifold $S_{r}:=\partial B(o, r)\setminus
\textrm{Cut}(o)$\,.

$M$ is a {\it manifold with a pole}, if it has a point $o\in M$
with $\textrm{Cut}(o)=\emptyset$. The point $o$ is called {\it
pole} and the polar coordinates $(r,\theta)$ are defined in
$M\setminus\{o\}$.

A manifold with a pole is a {\it spherically symmetric manifold}
or a {\it model}, if the Riemannian metric is given by
\begin{equation}\label{eq600}
ds^2 = dr^2+\psi^2(r)d\theta^2,
\end{equation}
where $d\theta^2$ is the standard metric in $\mathbb S^{m-1}$, and
\begin{equation}\label{eq601}
\psi\in \mathcal A:=\Big\{f\in C^\infty((0,\infty))\cap
C^1([0,\infty)): f'(0)=1,\, f(0)=0,\, f>0\text{ in }
(0,\infty)\Big\}.
\end{equation}
In this case, we write $M\equiv M_\psi$; furthermore, we have
$\sqrt{A(r,\theta)}=\psi^{m-1}(r)$, so the boundary area of the
geodesic sphere $\partial S_R$ is computed by
\[S(R)=\omega_m\psi^{m-1}(R),\]
$\omega_m$ being the area of the unit sphere in $\mathbb R^m$.
Also, the volume of the ball $B_R(o)$ is given by
\[\mu(B_R(o))=\int_0^R S(\xi)d\xi\,. \]

Observe that for $\psi(r)=r$, $M=\mathbb R^m$, while for
$\psi(r)=\sinh r$, $M$ is the $m-$dimensional hyperbolic space
$\mathbb H^m$.

\begin{exe}\label{exe1}
Let $M$ be an $m-$dimensional model manifold with pole $o$ and
metric given by \eqref{eq600} with
\[\psi(r):=\begin{cases}
r  &  \,\,  \textrm{if} \  0\leq r<1  \, , \\
 [r^{\alpha-1}(\log r)^{\beta} ]^{\frac 1{m-1}} & \ \textrm{if} \  r > 2 \,; \\
\end{cases}
\]
where $\alpha>1$ and $\beta\in \left(0, \frac 1{q-1}\right]\,.$ We
consider problem \eqref{Eq} with $V\equiv 1$ and $p=2$.  Note that
for $R>0$ large enough
\[\mu(B_R)\simeq C R^{\alpha}(\log R)^\beta\leq C R^{\alpha+\sigma}\,,\]
for any $\sigma>0$, while
$$\lim_{R\to+\infty}\frac{\mu(B_R)}{R^\alpha}=+\infty.$$ Furthermore,
\[\frac{d}{dr}\pa{\log\sqrt{A(r)}}=\frac{d}{dr}\pa{\log\big([\psi(r)]^{m-1}\big)}\leq \frac C r\quad \textrm{for all}\,\, r>0\,.\]
Thus, for $\alpha>2$, from \cite[Theorem A]{Zhang} we can infer that
problem \eqref{Eq} does not admit nonnegative nontrivial solutions,
provided that $1<q\leq 1+ \frac{2}{\alpha+\sigma}$ for some
$\sigma>0$, that is provided that
\[1<q < 1 +\frac 2{\alpha}\,.\]

On the other hand, just assuming $\alpha>1$, we can apply Corollary
\ref{cor3} with $p=2$ (see also Remark \ref{rem2}), where
$f(t)\equiv 1$, $g(x)\equiv 1$, $\sigma_1=\alpha$, $\sigma_2=1$,
$\delta_1=\beta$, $\delta_2=0$, and thus we can deduce that problem
\eqref{Eq} does not admit nonnegative nontrivial solutions, provided
that
\[1<q \leq 1 +\frac 2{\alpha}\,.\] So, we can exclude existence of nontrivial solutions also in the particular
case when $q=1+\frac{2}{\alpha}$.
\end{exe}

\end{document}